\DeclareSymbolFont{bbold}{U}{bbold}{m}{n}
\DeclareSymbolFontAlphabet{\mathbbold}{bbold}
\newcommand{\pto}[1]{{\scriptscriptstyle(#1)}}
\newcommand{\jj}{{\bm{j}}}
\renewcommand{\mm}{{\bm{m}}}
\renewcommand{\nn}{{\bm{n}}}
\newcommand{\MSCSubjectCode}[1]{\href{https://zbmath.org/classification/?q=cc\%3A#1}{#1}}
\begin{document}


\newgeometry{top=1cm, bottom=2.25cm, left=3.5cm, right=3.5cm}

\title{An Exact Perturbative \\ Existence and Uniqueness Theorem}

\author{Nikita Nikolaev}

\affil{\small School of Mathematics and Statistics, University of Sheffield, United Kingdom
\\School of Mathematics, University of Birmingham, United Kingdom}

\date{31 October 2024}

\maketitle
\thispagestyle{frontpage}

\begin{abstract}
\noindent
We investigate singularly perturbed nonlinear complex differential systems of the form $\hbar \del_x f = \F (x, \hbar, f)$ where $\hbar$ is a small complex perturbation parameter.
Under a geometric assumption on the eigenvalues of the Jacobian matrix of $\F$, we prove an Existence and Uniqueness Theorem for exact perturbative solutions; i.e., holomorphic solutions with prescribed perturbative expansions in $\hbar$.
In fact, these solutions are the Borel resummation of the formal perturbative solutions.
\end{abstract}

{\small
\textbf{Keywords:}
exact perturbation theory, singular perturbation theory, Borel resummation, Borel-Laplace theory, asymptotic analysis, Gevrey asymptotics, nonlinear ODEs, existence and uniqueness, nonlinear systems

\textbf{2020 MSC:} 
	\MSCSubjectCode{34M60} (primary),
	\MSCSubjectCode{34M04},
	\MSCSubjectCode{34M30},
	\MSCSubjectCode{34E15},
	\MSCSubjectCode{34E20},
	\MSCSubjectCode{34A12},
	\MSCSubjectCode{34A34}
}


{\begin{spacing}{0.9}
\small
\setcounter{tocdepth}{3}
\tableofcontents
\end{spacing}
}

\newpage
\restoregeometry
\setcounter{section}{-1}
\section{Introduction}
\label{211214170249}
\enlargethispage{20pt}

Consider the following 1\textsuperscript{st}-order singularly perturbed nonlinear differential system:
\begin{equation*}
	\hbar \del_x f = \F (x, \hbar, f)
\fullstop{,}
\end{equation*}
where $\F$ is an $\N$-dimensional holomorphic vector function of a single complex variable $x$, a small complex perturbation parameter $\hbar$, and the unknown holomorphic $\N$-dimensional vector function $f = f(x,\hbar)$.
Suppose $\F$ is a polynomial in $\hbar$, or more generally admits an asymptotic expansion $\hat{\F}$ as $\hbar \to 0$ in some sector.

By expanding this equation in power series in $\hbar$, we can construct \textit{formal perturbative solutions} $\hat{f} = \hat{f} (x,\hbar)$.
However, it is a well-known phenomenon in singular perturbation theory that such solutions generically have zero radius of convergence and therefore are not analytic objects.
Our main result (\autoref{220801172208}) is an existence and uniqueness theorem that allows a formal perturbative solution $\hat{f}$ to be promoted in a canonical way to an \textit{exact perturbative solution}; i.e., a holomorphic solution $f$ whose expansion as $\hbar \to 0$ is $\hat{f}$.
In fact, we prove that $f$ is the uniform Borel resummation of $\hat{f}$.
This is a remarkable property that permits one to deduce a lot of refined information about the highly transcendental solution $f$ from the much more explicitly defined formal solution $\hat{f}$.

\paragraph*{Motivation.}
The problem of promoting formal perturbative data to analytic data in an effective way is fundamental in what may be referred to as \textit{exact perturbation theory}, by which we mean singular perturbation theory reinforced with techniques from resurgent asymptotic analysis.
The latter includes the more classical theory of Borel-Laplace transforms which is what we employ in this article.
Recently there has been a remarkable surge of interest in exact perturbation theory.
For example, it sits at the heart of attempts to construct a new mathematical approach to quantum systems and to quantisation that goes ``beyond perturbation theory''; see, for instance, some recent references such as \cite{1605.07615,MR3919267,MR4137990,MR4339760,MarinoQMBook}.
Techniques from exact perturbation theory are also emerging in a variety of subjects including algebraic geometry \cite{MR4403710,2203.08249}, low-dimensional topology \cite{MR4287192,1811.05376}, and even superconductivity in condensed matter physics \cite{MR4063581}.

These advances follow in the footsteps of the success of the \textit{exact WKB method} \cite{MR729194, MR819680, MR2182990, MR3003931, MR3280000, MY210623112236}.
In fact, one of our main motivations is to finally establish rigorous exact WKB method for singularly perturbed linear differential equations of higher order.
This has been a major open problem in the subject for the best part of the last four decades.
\autoref{220801172208} can be used to give a partial answer to this problem; e.g., in \cite{MY241028192624} we use it to analyse the higher-order generalised Airy equations of the form $\hbar^n \del_x^n \psi = x^m \psi$.

\paragraph*{Singular Perturbation Theory: a quick refresher.}
The fact that exact perturbative solutions exist at all is a classical result in singular perturbation theory.
Let us emphasise, however, that this fact is \textit{not} used in our analysis, so the disinterested reader may safely skip to the next part.
Nevertheless, we mention it here in order to highlight the fact that this celebrated theory comes with a number of significant disadvantages, and to draw contrast with our results.
This well-known perturbation theory fact may be formulated as follows (see e.g. \cite[Theorem 26.1]{MR0460820} or \cite[Chapter XII]{MR1697415}).

\setcounter{thmx}{-1}
\begin{thm}[Perturbative Existence Theorem]{220406144408}
Consider the nonlinear system above where $\F (x, \hbar, y)$ is a complex vector function which is holomorphic for $x$ in a disc around the origin, for $y$ in a ball around the origin, and for $\hbar$ in some sector $S$ where it admits a uniform asymptotic expansion $\hat{\F}$ as $\hbar \to 0$.
Suppose the leading-order part $\F^\pto{0}$ of $\hat{\F}$ satisfies $\F^\pto{0} (0,0) = 0$ and the Jacobian matrix $\del \F^\pto{0} \big/ \del y$ is invertible at $(0,0)$.
If the given nonlinear system has a formal perturbative solution $\hat{f}$ near $x = 0$ with leading-order part $f^\pto{0} = 0$, then it has an exact perturbative solution $f$ which is holomorphic for $x$ in a disc around the origin and for $\hbar$ in some subsector $S' \subset S$ where it admits $\hat{f}$ as its uniform asymptotic expansion as $\hbar \to 0$.
\end{thm}

The main idea behind every proof known to us is to first choose \textit{some} holomorphic function $\tilde{f} = \tilde{f} (x, \hbar)$ whose asymptotic expansion as $\hbar \to 0$ in $S$ is the formal perturbative solution $\hat{f}$.
Then the change of variables $g = f - \tilde{f}$ transforms the given nonlinear system to a new nonlinear system for $g$, and so the problem is reduced to finding a solution $g$ whose asymptotic expansion as $\hbar \to 0$ is $0$.

A function $\tilde{f}$ always exists if the opening angle of $S$ is not too large.
However, it is inherently highly nonunique, because asymptotic expansions (especially in the sense of Poincaré) cannot detect the so-called exponential corrections (i.e., analytic functions with zero asymptotic expansions).
The solution $g$ is highly dependent on the chosen function $\tilde{f}$, hence so is the resulting exact perturbative solution $f$.
Consequently, the solution $f$ is also inherently nonunique and largely non-constructive.
Therefore, the \textit{Perturbative Existence Theorem is an existence result only}.
The situation is only made worse by the fact that there is little to no control on the size of the opening angle of the sector $S' \subset S$ (e.g., see the remark in \cite[p.144]{MR0460820}, immediately following Theorem 26.1), rendering it virtually impossible to describe the set of all possible exact perturbative solutions in any reasonable manner.

In contrast, our main theorem allows us to promote a formal perturbative solution $\hat{f}$ to an exact perturbative solution $f$ in a \textit{unique} way under a certain geometric assumption on the eigenvalues of the leading-order Jacobian of $\F$.
On top of that, this unique $f$ is the Borel resummation of $\hat{f}$, which means in particular that it is constructible to a reasonable extent.
More importantly, it means that a considerable amount of analysis can be carried out using the formal perturbative solution $\hat{f}$ (which is essentially defined algebraically through a sequence of completely computable operations that amount to little more than finite-dimensional matrix algebra) and then deduced for the exact perturbative solution $f$ using the general properties of Borel resummation.

\paragraph*{Remarks and discussion.}
Our constructions employ relatively basic and classical techniques from complex analysis which form the basis for the more modern and sophisticated theory of resurgent asymptotic analysis à la Écalle \cite{zbMATH03971144}; see also for instance \cite{MR2474083,sauzin2014introduction,MR3495546}.
Namely, we use the Borel-Laplace method which is briefly reviewed in \autoref{211215123550}.
We stress that the Borel-Laplace method ``is nothing other than the theory of Laplace transforms, written in slightly different variables'', echoing the words of Alan Sokal \cite{MR558468}.
As such, we have tried to keep our presentation very hands-on and self-contained, so the knowledge of basic complex analysis should be sufficient to follow.

What we call \textit{asymptotics of factorial type} is usually called \textit{Gevrey asymptotics} or \textit{$1$-Gevrey asymptotics} in the literature.
It is part of a hierarchy of asymptotic regularity classes, first introduced by Watson \cite{zbMATH02629428} and further developed by Nevanlinna \cite{nevanlinna1918theorie}.
See \cite{MR542737,MR579749} as well as \cite[§1.2]{MR3495546} and \cite[§XI-2]{MR1697415}.

We reverberate the opinion of Ramis and Sibuya \cite{MR991416} that in the theory of complex-analytic differential equations (with or without a complex perturbation parameter), the more appropriate notion of asymptotic expansions is asymptotics of factorial type rather than the more classical theory in the sense of Poincaré.
The aforementioned work of Ramis and Sibuya is in connection with solutions of complex-analytic differential equations near an irregular singularity, but the same point of view is apparent in other related subjects including (to cite only a few) the works of Écalle on resurgent functions \cite{EcalleCinqApplications,zbMATH03971144} and of Malgrange, Martinet, and Ramis on analytic diffeomorphisms \cite{MR689526,MR740592}.

\paragraph*{Notation and conventions.}
A brief summary of our notation, conventions, and definitions from asymptotics of factorial type and Borel-Laplace theory can be found in \autoref{211215112252}.
We adopt the notation, common in perturbation theory, that coefficients of $\hbar$-expansions are labelled by ``$^\pto{0}, {}^\pto{1}, {}^\pto{n}$'', etc..
The symbol $\Natural$ stands for nonnegative integers $0, 1, 2, \ldots$.
We will use boldface letters to denote nonnegative integer vectors; i.e., $\mm \coleq (m_1, \ldots, m_\N) \in \Natural^\N$, etc., and we put $|\mm| \coleq m_1 + \cdots + m_\N$.
Unless otherwise indicated, all sums over unbolded indices $n, m, \ldots$ are taken to run over $\Natural$, and all sums over boldface letters $\nn, \mm, \ldots$ are taken to run over $\Natural^\N$.

\paragraph*{Acknowledgements.}
The author wishes to thank Dylan Allegretti, Francis Bischoff, Alberto García-Raboso, Marco Gualtieri, Kohei Iwaki, Olivier Marchal, Andrew Neitzke, Nicolas Orantin, Kento Osuga, and Shinji Sasaki for helpful discussions during various stages of this project.
Special thanks also go to Marco Gualtieri for the many suggestions to improve the manuscript.
This work was supported by the EPSRC Programme Grant \textit{Enhancing RNG}.

\vspace{-5pt}
\section{Perturbative Solutions}
\label{220819093413}
\enlargethispage{20pt}

Throughout this paper, $X \subset \CC$ is a domain and $S \subset \CC$ is either an open neighbourhood of the origin or a sector with vertex at the origin and opening arc $A \subset \RR / 2\pi\ZZ$.
See \autoref{211215123326} for a brief review of some relevant concepts in asymptotics.

We consider the singularly perturbed nonlinear differential system of the form
\eqntag{\label{220305143824}
	\hbar \del_x f = \F (x, \hbar, f)
\fullstop{,}
}
where $\F = \F (x, \hbar, y)$ is a holomorphic map $X \times S \times \CC^\N \to \CC^\N$.
If $S$ is an open neighbourhood of the origin, let
\vspace{-2pt}
\eqntag{\label{220223140518}
	\hat{\F} (x, \hbar, y) \coleq \sum_{k=0}^\infty \F^\pto{k} (x, y) \hbar^k
\vspace{-2pt}
}
be its Taylor series expansion at $\hbar = 0$.
If $S$ is a sector, then we assume that $\F$ admits $\hat{\F}$ as a locally uniform asymptotic expansion
\vspace{-2pt}
\eqntag{\label{220222193719}
	\F (x, \hbar, y) \sim \hat{\F} (x, \hbar, y)
\quad
\text{as $\hbar \to 0$ along $A$, loc.unif. $\forall (x,y) \in X \times \CC^\N$\fullstop}
\vspace{-2pt}
}
We refer to $\hat{\F} : X \times S \to \CC \bbrac{\hbar}^\N$ as the \dfn{perturbative expansion} of $\F$.

\paragraph*{Examples.}
\label{220222152734}
The simplest nontrivial but important example is the singularly perturbed scalar Riccati equation $\hbar \del_x f = a_2 (x) f^2 + a_1 (x) f + a_0 (x)$ which was treated in detail in \cite{MY2008.06492} and which formed the basis for many of the ideas implemented in the present paper.

Recall that any scalar nonlinear equation of order $\N$ can be written equivalently as a 1\textsuperscript{st}-order rank-$\N$ nonlinear system \eqref{220305143824} by introducing higher-order derivatives as new variables.
Thus, for example, our results apply to all six singularly perturbed Painlevé equations.
In \cite[§2]{MY241028105225}, we give a sample application of our results to the singularly perturbed Painlevé I equation, $\hbar^2 \del^2_x q = 6 q^2 + x$.
Our results also apply to singularly perturbed linear ODEs, but not in the straightforward sense of considering linear functions $\F$ of $y$.
Instead, the analysis goes through the WKB method; see \cite{MY241028192624} where we study generalised Airy equations of the form $\hbar^n \del_x^n \psi = x^m \psi$.

\begin{defn}{220801195028}
Fix a phase $\theta \in \RR / 2\pi\ZZ$.
An \dfn{exact perturbative solution} \textit{in the direction $\theta$} of \eqref{220305143824} near a point $x_0 \in X$ is any holomorphic solution $f = f(x, \hbar)$, defined in a neighbourhood $U_0 \subset X$ of $x_0$ and a sector $S_0 \subset S$ whose opening $A_0$ contains the direction $\theta$, such that $f$ admits a uniform asymptotic expansion $\hat{f}$ as $\hbar \to 0$ in $S_0$.
We refer to $\hat{f}$ as the \dfn{perturbative expansion} of $f$.
\end{defn}

If $f$ is an exact perturbative solution, then its perturbative expansion $\hat{f}$ also satisfies equation \eqref{220305143824} in a formal sense; i.e., with differentiation $\del_x$ done order-by-order in $\hbar$.
For this to make sense when $\F$ is not holomorphic at $\hbar = 0$, the righthand side of \eqref{220305143824} must be replaced with the perturbative expansion $\hat{\F}$; i.e.,
\begin{equation}\label{220810093417}
	\hbar \del_x \hat{f} = \hat{\F} (x, \hbar, \hat{f})
\fullstop
\end{equation}

\begin{defn}{220810092522}
A \dfn{formal perturbative solution} of \eqref{220305143824} on a domain $U \subset X$ is any formal power series
\eqntag{\label{220810093453}
	\hat{f} = \hat{f} (x, \hbar) = \sum_{n=0}^\infty f^\pto{n} (x) \hbar^n
}
with holomorphic coefficients $f^\pto{n} : U \to \CC^\N$, which formally satisfies the formal nonlinear differential system \eqref{220810093417}.
\end{defn}

\section{Formal Perturbation Theory}

The starting point in our investigation of the nonlinear system \eqref{220305143824} is to construct its formal perturbative solutions.
This is expressed in the following more or less well-known result (see, e.g., \cite[Theorem XII-5-2]{MR1697415}).

\begin{defn}{220810094002}
A point $(x_0, y_0) \in X \times \CC^\N$ is called a \dfn{regular point} if $\F^\pto{0} (x_0, y_0) = 0$ and $\del \F^\pto{0} / \del y$ is invertible at $(x_0, y_0)$.
In contrast, it is called a \dfn{transition point} if $\F^\pto{0} (x_0, y_0) = 0$ but the Jacobian $\del \F^\pto{0} / \del y$ is \textit{not} invertible at $(x_0, y_0)$.
In this case, $x_0$ is called a \dfn{turning point}.
\end{defn}

\begin{prop}[Formal Perturbative Existence and Uniqueness Theorem]{211209161918}
Consider the nonlinear system \eqref{220305143824} and choose a regular point $(x_0, y_0) \in X \times \CC^\N$.
Then $x_0$ has a neighbourhood $U$ such that there exists a unique formal perturbative solution $\hat{f}$ on $U$ whose leading-order part satisfies $f^\pto{0} (x_0) = y_0$.
\end{prop}

In fact, every \dfn{$n$-th-order corrections} $f^\pto{n}$ is uniquely determined by the \dfn{leading-order solution} $f^\pto{0}$ through an explicit recursive formula (presented in \autoref{241028204502}).
In particular, the leading-order solution $f^\pto{0}$ is the unique holomorphic solution of the implicit equation $\F^\pto{0} (x, f^\pto{0}) = 0$ satisfying $f^\pto{0} (x_0) = y_0$, and the first-order correction $f^\pto{1}$ is the unique solution of
\begin{equation}\label{220801142518}
	\J f^\pto{1} = \del_x f^\pto{0} - \F^\pto{1} (x, f^\pto{0})
\fullstop{,}
\end{equation}
where $\J = \J (x) \coleq \evat{\big( \del \F^\pto{0} / \del y \big)}{y = f^\pto{0} (x)}$.

The proof of \autoref{211209161918} amounts to plugging the solution ansatz \eqref{220810093453} into the corresponding formal system \eqref{220810093417} which is then solved order-by-order in $\hbar$.
The key that makes this possible is that at each order in $\hbar$, system \eqref{220810093417} is no longer a differential system because the derivative term $\hbar \del_x$ depends only on the lower-order information.
Each higher-order coefficient $f^\pto{n}$ is then the unique solution to an inhomogeneous algebraic linear system if and only if the Jacobian matrix $\del \F^\pto{0} \big/ \del y$ at $(x_0, y_0)$ is invertible.
For completeness, the proof is presented in \autoref{211218191751}.

Even if $\F$ is constant in $\hbar$, the formal perturbative solution $\hat{f}$ from \autoref{211209161918} is generically a divergent power series in $\hbar$.
However, it is not an arbitrary formal series, because (as we prove next) its coefficients $f^\pto{n}$ essentially grow at most like $n!$.
That is, $\hat{f}$ is a power series of factorial type in $\hbar$.
Equivalently, its formal Borel transform 
\begin{equation}\label{220815162318}
	\hat{\phi} (x, \xi) =
		\hat{\Borel} [ \, \hat{f} \, ] (x, \xi)
		\coleq \sum_{n=0}^\infty \tfrac{1}{n!} f_{n+1} (x) \xi^n	
\end{equation}
is a convergent power series in the variable $\xi$.
This property remains true if $\F$ is holomorphic at $\hbar = 0$, and more generally we have the following theorem.

\begin{prop}[convergence of the Borel transform]{220307181810}
Consider the nonlinear system \eqref{220305143824} and choose a regular point $(x_0, y_0) \in X \times \CC^\N$.
If $S$ is a sector, then suppose in addition that the perturbative expansion \eqref{220222193719} is of factorial type:
\begin{equation}\label{220815164300}
	\F (x, \hbar, y) \simeq \hat{\F} (x, \hbar, y)
\quad
\text{as $\hbar \to 0$ along $A$,}
\end{equation}
uniformly for all $x$ near $x_0$ and locally uniformly for all $y \in \CC^\N$.
Then the corresponding unique formal perturbative solution $\hat{f}$ is a power series in $\hbar$ of factorial type, uniformly near $x_0$.
In particular, its formal Borel transform \eqref{220815162318} is a convergent power series in $\xi$, uniformly for all $x$ near $x_0$.
\end{prop}

Concretely, \autoref{220307181810} says that there is a neighbourhood $U \subset X$ of $x_0$ and real constants $\C, \M > 0$ such that
\eqntag{\label{220307190636}
	\big| f^\pto{k} (x) \big| \leq \C \M^k k!
\qqquad
	\text{$\forall x \in U, \forall k \geq 0$\fullstop}
}
The proof is presented in \autoref{220815121209}.
We remark that \autoref{220307181810} is not used in the proof of our main \autoref{220801172208} below.
Indeed, under the much stronger hypothesis of \autoref*{220801172208}, the formal perturbative solution $\hat{f}$ is Borel summable and hence the convergence of its Borel transform follows.

\section{Normal Form}
\label{220804191212}

We continue to investigate the nonlinear system \eqref{220305143824}.
Fix a regular point $(x_0, y_0)$, and let $f^\pto{0}$ be the unique holomorphic leading-order solution satisfying $f^\pto{0} (x_0) = y_0$.
Then we can consider the Jacobian matrix of the leading-order part $\F^\pto{0}$ evaluated at the leading-order solution $y = f^\pto{0} (x)$:
\begin{equation}\label{220719160757}
	\J (x) \coleq \evat{\frac{\del \F^\pto{0}}{\del y}}{y = f^\pto{0} (x)}
\fullstop
\end{equation}
It is a holomorphic invertible $\N\!\times\!\N$-matrix defined in a neighbourhood of $x_0$.
Let $\lambda_1, \ldots, \lambda_\N$ be its eigenvalues (not necessarily distinct).
They are nonvanishing holomorphic functions near the point $x_0$.
Put:
\begin{equation}\label{220801125723}
	\Lambda \coleq \diag (\lambda_1, \ldots, \lambda_\N)
\fullstop
\end{equation}

If the eigenvalues of $\J (x)$ are all distinct, then there is a holomorphic invertible matrix $\P = \P (x)$ which diagonalises $\J$; i.e., $\P^{-1} \J \P = \Lambda$.
Introduce the change of the unknown variable
\begin{equation}\label{220802155134}
	f \mapsto g
\qqtext{given by}
	f = f^\pto{0} + \P g
\fullstop
\end{equation}
In other words, this is a linearisation of the nonlinear system to leading-order around the leading-order solution $f^\pto{0}$, followed by a gauge transformation.
Then, upon left multiplication by $\P^{-1}$, our system is transformed into
\begin{equation}\label{220801150406}
	\hbar \del_x g = \Lambda g + \G (x, \hbar, g)
\fullstop{,}
\end{equation}
where $\G$ is the vector function defined by
\begin{equation}\label{220806154031}
	\G (x, \hbar, y)
		= - \Lambda y + \P^{-1} \F (x, \hbar, f^\pto{0} + \P y)
			- \hbar (\P^{-1} \del_x \P) y
			- \hbar \P^{-1} \del_x f^\pto{0}
\fullstop
\end{equation}
Notice that $\G$ also admits an asymptotic expansion $\hat{\G}$ as $\hbar \to 0$ and that its leading-order part is at least quadratic in the components of $y$.

\begin{defn}{220808113615}
\label{220808125032}
We shall say that a nonlinear system \eqref{220305143824} is in \dfn{normal form} if 
\begin{equation}\label{241029101812}
	\F^\pto{0} (x, 0) = 0
\qqtext{and}
	\evat{\frac{\del \F^\pto{0}}{\del y}}{y=0} = \Lambda = \diag \set{\lambda_1, \ldots, \lambda_\N}
\fullstop{,}
\end{equation}
for some nonvanishing holomorphic functions $\lambda_i = \lambda_i (x)$ on $X$.
We refer to these functions $\lambda_1, \ldots, \lambda_\N$ as the \dfn{eigenvalues} of \eqref{220305143824}.
\end{defn}

Normal forms in this sense are not unique.
If $\M = \M (x)$ is any invertible holomorphic matrix that commutes with $\Lambda$, then the gauge transformation $\P' \coleq \P \M$ also sends our system to a normal form $\hbar \del_x g' = \Lambda g' + \G' (x, \hbar, g')$, where $\G'$ and $\G$ are related by $\G' (x, \hbar, y) = \M^{-1} \G (x, \hbar, \M y) - (\M^{-1} \del_x \M) y$.
However, the eigenvalues in the above sense are uniquely determined up to relabelling.

\section{Adapted Domains}
\label{241030123425}

Let $\lambda_i$ be a holomorphic function on $X$.
Given a phase $\theta \in \RR / 2 \pi \ZZ$, a \dfn{$\theta$-trajectory} for $\lambda_i$ is any positive-oriented integral curve of the differential form $\Im \big( e^{-i \theta} \lambda_i (x) \d{x} \big)$.
More explicitly, it is a smooth real curve $\Gamma_{i,\theta} : I \to X$ which is locally given by the equation
\begin{equation}
	\Im \left( e^{-i\theta} \phi_i (x) \right) = 0
\qtext{where}
	\phi_i (x) \coleq \int\nolimits_{x_0}^x \lambda_i (x') \d{x'}
\end{equation}
for some $x_0 \in X$.
A $\theta$-trajectory that \dfn{emanates from} $x_0$, denoted by $\Gamma_{i,\theta} (x_0)$, is given by the inequality
\begin{equation}
	\Im \left( e^{-i\theta} \phi_i (x) \right) = 0
\qtext{and}
	\Re \left( e^{-i\theta} \phi_i (x) \right) \geq 0
\fullstop
\end{equation}
The real number $t = t(x) = \Re \left( e^{-i\theta} \phi_i (x) \right)$ gives a natural parameterisation of the trajectory $\Gamma_{i,\theta}$.
Thus, $t(x)$ is the time it takes to flow from the point $x_0$ to the point $x$ along the trajectory $\Gamma_{i,\theta}$.
Conversely, we write $x(t)$ to mean the point on $\Gamma_{i,\theta}$ which is the result of flowing along $\Gamma_{i,\theta}$ from $x_0$ for time $t$.

We say that a $\theta$-trajectory is \dfn{infinite} if it exists for all sufficiently large $t$.
The \dfn{limit} of a trajectory is the limit as $t \to +\infty$.
A trajectory is called \dfn{divergent} if its limit consists of more than one point.
A non-divergent trajectory that limits to a turning point is called a \dfn{critical trajectory}.

\begin{defn}
\label{241030123444}
Given a holomorphic function $\lambda_i$ on $X$, we say that a domain $U \subset X$ is \dfn{adapted} for $\lambda_i$ \dfn{with phase $\theta$} if $\lambda_i$ is nonvanishing on $U$ and every $\theta$-trajectory emanating from every point $x_0 \in U$ is infinite and contained in $U$.
\end{defn}

The final set of definitions we need in order to state our main Theorem is about the behaviour of functions in the limit along a trajectory.
First, suppose $\F = \F (x)$ is a holomorphic function on $X$ and $\Gamma_{i,\theta}$ is an infinite $\theta$-trajectory contained in $X$.
Suppose also that $U \subset X$ is an adapted domain for $\lambda_i$ with phase $\theta$.

\begin{defn}
\label{241030154639}
The \dfn{limit} of $\F$ \dfn{along the trajectory} $\Gamma_{i,\theta}$ to be
\begin{equation}
\label{241030154311}
	\lim_{t \to +\infty} \F \big( x(t) \big)
\quad\text{as $x(t) \in \Gamma_{i,\theta}$}
\fullstop
\end{equation}
\end{defn}

Note that the point $x(t)$ in \eqref{241030154311} depends on the initial point $x_0$ on the trajectory $\Gamma_{i,\theta}$, but the limit \eqref{241030154311}, if it exists, is independent of this choice.

\begin{defn}
\label{241030184237}
We say that $\F$ is \dfn{bounded at infinity along the trajectory} $\Gamma_{i,\theta}$ if it is bounded in the limit as $t \to +\infty$ in the sense of \autoref{241030154639}.
That is to say, there are constants $\M, \T > 0$ such that
\begin{equation}
	\big| \F \big( x(t) \big) \big| \leq \M
\qquad
	\forall t \geq \T
\qtext{where}
	x(t) \in \Gamma_{i,\theta}
\fullstop
\end{equation}
\end{defn}

\begin{defn}
\label{241030184238}
We say that the restriction of $\F$ to $U$ is \dfn{locally uniformly bounded at infinity along the $\theta$-trajectories} of $\lambda_i$ if for any compact subset $K \subset U$ there are constants $\M, \T > 0$ that yield the following bound for all $x_0 \in K$:
\begin{equation}
	\big| \F \big( x(t) \big) \big| \leq \M
\qquad
	\forall t \geq \T
\qtext{where}
	x(t) \in \Gamma_{i,\theta} (x_0)
\fullstop
\end{equation}
\end{defn}

Next, we need to upgrade \autoref{241030184238} from functions on $X$ to holomorphic maps $\F = \F (x, \hbar, y) : X \times S \times \CC^\N \to \CC^\N$, where $S$ is a neighbourhood of $\hbar = 0$, and speak of boundedness at infinity along the trajectories which holds with some uniformity in $\hbar$ and $y$.

\begin{defn}
\label{241030184824}
We say that the restriction of $\F$ to $U$ is \dfn{locally uniformly bounded at infinity along the $\theta$-trajectories} of $\lambda_i$ if for any compact subsets $K \subset U, K' \subset S, K'' \subset \CC^\N$, there are constants $\M, \T > 0$ such that for all $(x_0, \hbar, y) \in K \times K' \times K''$:
\begin{equation}
	\big| \F \big( x(t), \hbar, y \big) \big| \leq \M
\qquad
	\forall t \geq \T
\qtext{where}
	x(t) \in \Gamma_{i,\theta} (x_0)
\fullstop
\end{equation}
\end{defn}

In many applications, $\F$ is a polynomial in both $\hbar$ and the components of $y$.
For convenience of use, we spell out the conditions of \autoref{241030184824} in this special case in the following obvious Lemma.
Note that any such $\F$ can be expanded as follows:
\begin{equation}
	\F (x, \hbar, y) 
		= \sum_{k=0}^{\textup{\tiny finite}} \sum_{m=0}^{\textup{\tiny finite}} \sum_{|\mm|=m}
			\F_{k,\mm} (x) \hbar^k y^\mm
\fullstop{,}
\end{equation}
where $\F_{k,\mm} (x) \hbar^k y^\mm = \F_{k,m_1 \cdots m_\N} (x) \hbar^k y^{m_1}_1 \cdots y^{m_\N}_\N$.

\begin{lem}
\label{241030180135}
Suppose $\F = \F (x, \hbar, y)$ is a polynomial in $\hbar$ and the components of $y$, with coefficients $\F_{k,\mm} (x)$ which are holomorphic functions on $X$.
Suppose $U \subset X$ is an adapted domain for $\lambda_i$.
Then $\F$ satisfies the conditions in \autoref{241030184824} if and only if each coefficient $\F_{k,\mm}$ satisfies the conditions in \autoref{241030184238}.
\end{lem}

Finally, we need to upgrade \autoref{241030184824} to the case where $S$ is a sector at the origin with opening $A$, and $\F$ admits a perturbative expansion
\begin{equation}\label{241030190302}
	\F ( x, \hbar, y ) \simeq \hat{\F} ( x, \hbar, y)
\qquad
	\text{as $\hbar \to 0$ along $A$}
\fullstop
\end{equation}
Namely, we want to speak of this expansion as being valid with some uniformity at infinity along the trajectories.

\begin{defn}
\label{241030190528}
We say that the restriction of $\F$ to $U$ admits the perturbative expansion \eqref{241030190302} \dfn{locally uniformly at infinity along the $\theta$-trajectories} of $\lambda_i$ for any compact subsets $K \subset U$ and $K' \subset \CC^\N$, there is a constant $\T > 0$ such that 
\begin{equation}\label{241030191622}
	\F \big( x (t) , \hbar, y ) \simeq \hat{\F} \big( x (t), \hbar, y \big)
\qquad
	\text{as $\hbar \to 0$ along $A$}
\end{equation}
holds uniformly for all $t \geq \T$ and all $(x_0, y) \in K \times K'$ where $x(t) \in \Gamma_{i,\theta} (x_0)$.
\end{defn}

\newpage
\section{Exact Perturbation Theory}
\label{220222154343}

We are now ready to state and prove the main result of this paper.

\begin{thm}[Exact Perturbative Existence and Uniqueness Theorem]{220801172208}
\mbox{}\\
Consider a nonlinear system \eqref{220305143824} in normal form with eigenvalues $\lambda_1, \ldots, \lambda_\N$.
Suppose $U \subset X$ is a domain which is adapted to each $\lambda_i$ with some phase $\theta \in \RR / 2 \pi \ZZ$.
If $S$ is a neighbourhood around the origin, assume that the restriction of $\Lambda^{-1}\F$ to $U$ is locally uniformly bounded at infinity along the $\theta$-trajectories of each eigenvalue $\lambda_i$ (see \autoref{241030184824}).
Alternatively, if $S$ is a sector at the origin with opening $A$ of total angle $|A| = \pi$, we assume that the perturbative expansion 
\begin{equation}\label{241031112529}
	\Lambda (x)^{-1} \F (x, \hbar, y) \simeq \Lambda (x)^{-1} \hat{\F} (x, \hbar, y)
\qquad
	\text{as $\hbar \to 0$ unif. along $A$}
\fullstop{,}
\end{equation}
has factorial type locally uniformly at infinity along the $\theta$-trajectories of each eigenvalue $\lambda_i$ (see \autoref{241030190528}).

Then for any compactly contained subdomain $U_0 \Subset U$, there is a subsector $S_0 \subset S$ with the same opening $A$ such that the nonlinear system \eqref{220305143824} has a unique holomorphic solution $f = f(x,\hbar) \in \cal{O} (U_0 \times S_0)$ with the property
\begin{equation}\label{220802072552}
	f (x, \hbar) \simeq \hat{f} (x, \hbar)
\qquad
	\text{as $\hbar \to 0$ unif. along $A$, unif. $\forall x \in U_0$\fullstop}
\end{equation}
In fact, $f$ is the uniform Borel resummation of $\hat{f}$ with phase $\theta$:
\begin{equation}\label{220802072603}
	f (x, \hbar) = s_\theta \big[ \, \hat{f} \, \big] (x, \hbar)
	\qqqquad \forall (x, \hbar) \in U_0 \times S_0\fullstop
\end{equation}
In particular, the formal perturbative solution $\hat{f}$ is a Borel summable series with phase $\theta$ uniformly on $U_0$.
\end{thm}

Before launching into the proof (which we do in \autoref{220112110800}), let us make a few remarks.

\begin{rem}[uniqueness]{220807154540}
The uniqueness of the solution $f$ is a direct consequence of the asymptotic property \eqref{220802072552}; in particular, of the fact that the factorial type is \textit{uniform} along the arc $A$ of opening angle $\pi$.
A \hyperref[210617120300]{theorem of Nevanlinna} (\cite[pp.44-45]{nevanlinna1918theorie}; see also \cite[Theorem B.11]{MY2008.06492}) explains how this asymptotic property eliminates exponential corrections that obstruct uniqueness, as mentioned in \autoref{220406144408}.
The fact that a solution with such a strong asymptotic property exists is the difficult part of \autoref{220801172208}.
\end{rem}

\begin{rem}[maximal domains]{220822130211}
Since \autoref{220801172208} is an existence and \textit{uniqueness} result (rather than a mere existence result such as \autoref{220406144408}), the exact perturbative solution $f$ can be extended to all $x$ in the domain $U \subset X$ at the expense of taking smaller and smaller subsectors $S_0 \subset S$.
That is, $f$ is a holomorphic solution on a domain $\UU \subset X \times S$ whose first projection is $\pr_1 (\UU) = U$ and which has the following properties.
Every point $x_0 \in U$ has a neighbourhood $U_0 \subset U$ such that there is a sector $S_0 \subset S$ with the property that $U_0 \times S_0 \subset \UU$ and $f$ admits asymptotic expansion $\hat{f}$ as $\hbar \to 0$ uniformly along $A$ with factorial type, uniformly on $U_0$:
\begin{equation}\label{220818150850}
	f (x, \hbar) \simeq \hat{f} (x, \hbar)
\qquad
\text{as $\hbar \to 0$ unif. along $A$, loc.unif. $\forall x \in U$
\fullstop}
\end{equation}
Furthermore, $f$ is the locally uniform Borel resummation of $\hat{f}$ on $U$ with phase $\theta$:
\begin{equation}
	f (x, \hbar) = s_\theta \big[ \, \hat{f} \, \big] (x, \hbar)
	\qqqquad \forall (x, \hbar) \in \UU
\fullstop
\end{equation}
In particular, the formal perturbative solution $\hat{f}$ is a Borel summable series with phase $\theta$ locally uniformly on $U$.
\end{rem}

\subsection{Exact Perturbation Theory: Proof of \autoref*{220801172208}}
\label{220112110800}

In this subsection, we present a proof of our main result, \autoref{220801172208}.
In brief, the strategy is to apply the Borel transform, rewrite the new equation as an integral equation, find a solution using the method of successive approximations, and then apply the Laplace transform.

\begin{proof}[Proof of \autoref{220801172208}: Uniqueness.]
The uniqueness of $f$ follows from the asymptotic property \eqref{220802072552}.
Indeed, suppose $f'$ is another such solution.
Then their difference $f - f'$ is a holomorphic map $U_0 \times S_0 \to \CC^\N$ which is factorial-type asymptotic to the zero map as $\hbar \to 0$ uniformly along $A$ of opening angle $\pi$, uniformly on $U_0$.
By \hyperref[210617120300]{Nevanlinna's Theorem}, there can only be one holomorphic function on $S_0$ (namely, the constant function $0$) which is factorial-type asymptotic to $0$ as $\hbar \to 0$ uniformly along $A$, so $f - f'$ must be the zero map.
\end{proof}

\begin{proof}[Proof of \autoref{220801172208}: Existence.]
The proof is organised into several steps below.
The strategy to construct the solution $f$ is as follows.
In Step 1, we make some preliminary transformations and simplifying assumptions that do not compromise generality.
In Step 2, we make a coordinate transformation in order to rewrite the given system as a new differential equation \eqref{241025192030} in a natural set of variables adapted to the eigenvalues $\lambda_1, \ldots, \lambda_\N$ of the Jacobian $\J$.
Then we reformulate the problem in terms of this new differential equation in \autoref{220725180003}.

To prove \autoref{220725180003}, in Step 3 we apply the Borel transform to the new differential equation \eqref{241025192030} to obtain an integro-differential equation \eqref{211212181832}, which we then rewrite as an integral equation \eqref{211124152233} in Step 4.
Then in Step 5 we proceed to solve this integral equation using the method of successive approximations.
In \autoref{220802150038} we assert that the constructed sequence of approximations converges and defines the solution with desired properties.
Finally, in Step 6, we apply the Laplace transform to the solution of this integral equation to obtain the desired solution $f$.
The proof of \autoref{220802150038} proceeds in several steps which we organise into \autoref{220809141652}.
In Step 1, we verify that the previously constructed sequence of approximations actually satisfies the integral equation.
Then in Step 2, we prove the convergence of this sequence.

\paragraph*{Step 1: Preliminary simplifications.}
Without loss of generality, we may assume that $U$ is simply connected (otherwise, we replace it with its universal cover).
By a simple rotation in the $\hbar$-plane, we can also assume without loss of generality that $\theta = 0$.
We also immediately restrict our attention to a Borel sector in the $\hbar$-plane of some radius $r > 0$; i.e., assume that 
\begin{equation}
\label{241028115203}
	S = \set{ \hbar ~\big|~ \Re (1/\hbar) > 1/ r}
\qtext{and}
	A = (-\pi/2,+\pi/2)
\fullstop
\end{equation}
Next, using the change of variables
\begin{equation}
	f \mapsto g
\qqtext{given by}
	f = \hbar (f^\pto{1} + g)
\fullstop{,}
\end{equation}
we can transform the given system \eqref{220305143824} into a system of the form
\begin{equation}\label{220807160104}
	\hbar \Lambda^{-1} \del_x g = g + \hbar \G (x, \hbar, g)
\fullstop{,}
\end{equation}
where $\G (x, \hbar, y)$ is a holomorphic vector function $U \times S \times \CC^\N \to \CC^\N$ which satisfies
\begin{equation*}
	\G (x, \hbar, y) \simeq \hat{\G} (x, \hbar, y)
\quad
	\text{as $\hbar \to 0$ unif. along $A$, unif. $\forall x \in U$, loc.unif. $\forall y \in \CC^\N$\fullstop}
\end{equation*}
Explicitly, differential system \eqref{220807160104} is a system of $\N$ equations in $\N$ variables $g_1, \ldots, g_\N$:
\begin{equation}\label{220806200313}
	\hbar \frac{1}{\lambda_i (x)} \del_x g_i = g_i + \hbar \G_i (x, \hbar; g_1, \ldots, g_\N)
\fullstop
\end{equation}

\paragraph*{Step 2: Coordinate transformation.}
Fix any basepoint $x_0 \in U$, and consider the coordinate transformation
\begin{equation}
	\phi_i : x \mapstoo z \coleq \int\nolimits_{x_0}^x \lambda_i (x') \d{x'}
\end{equation}
for each $i = 1, \ldots, \N$.
Let $\Omega_i \subset \CC$ be the image of $U \subset X$ under the isomorphism $\phi_i$:
\begin{equation}
\label{241028114742}
	\phi_i : U \iso \Omega_i
\qqtext{and}
	\phi_{ij} \coleq \phi_j \circ \phi_i^{-1} : \Omega_i \iso \Omega_j
\fullstop
\end{equation}
Now, introduce the Riemann surface
\begin{equation}
	\Omega \coleq \Omega_1 \sqcup \cdots \sqcup \Omega_\N
\fullstop{,}
\end{equation}
and define a holomorphic map $\A : \Omega \times S \times \CC^\N \to \CC^\N$ by
\begin{equation}
	\A (z, \hbar, y) \coleq \G_i \big( \phi_i^{-1} (z), \hbar, y \big)
\qquad
	\text{for all $z \in \Omega_i$}
\fullstop{,}
\end{equation}
for each $i = 1, \ldots, \N$.
Note that $\A$ admits an asymptotic expansion $\hat{\A}$ as $\hbar \to 0$ uniformly along $A$ of factorial type, uniformly in $z \in \Omega$ and locally uniformly in $y \in \CC^\N$:
\begin{equation}\label{220725182533}
	\A (z, \hbar, y) \simeq \hat{\A} (z, \hbar, y)
\quad
\text{as $\hbar \to 0$ unif. along $A$\fullstop}
\end{equation}

Applying the coordinate transformation $\phi_i$ to row $i$ in \eqref{220806200313}, we obtain a scalar differential equation on $\Omega \times S$ for $\N$ unknown functions $s_1, \ldots, s_n$, which on each component $\Omega_i \times S$ reads
\begin{equation}\label{241025192030}
	\hbar \del_{z} s_{i} = s_{i} + \hbar \A (z, \hbar; s_{1}, \ldots, s_{\N})
\fullstop{,}
\end{equation}
subject to the constraint that each $s_i$ has the following property:
\begin{equation}
	s_i (z_j) = s_i (z_i)
\text{ for all $z_i \in \Omega_i, z_j \in \Omega_j$}
\qtext{whenever}
	z_j = \phi_{ij} (z_i)
\fullstop
\end{equation}

\begin{lem}
\label{220725180003}
There is a unique set of $\N$ holomorphic functions $s_1, \ldots, s_n$ which satisfy the differential equation \eqref{241025192030} such that
\begin{equation}
	s_i (z, \hbar) \simeq \hat{s}_i (z, \hbar)
\qtext{as $\hbar \to 0$ unif. along $A$, loc. unif. $\forall z \in \Omega$.}
\end{equation}
Each $s_i$ is defined on a domain $\bm{\Omega} \subset \Omega \times S$ with the property that every point in $\Omega$ has a neighbourhood $\Omega_0 \subset \Omega$ such that $\Omega_0 \times S_0 \subset \Omega \times S$ where $S_0 \subset S$ is a subsector with the same opening $A$.
\end{lem}

The function $\A$ can be expressed as a uniformly convergent multipower series in the components $y_1, \ldots, y_\N$ of $y$:
\eqntag{\label{211212180241}
	\A (z, \hbar, y) 
		= \sum_{m=0}^\infty \sum_{|\mm| = m} \A_{\mm} (z, \hbar) y^\mm
\fullstop{,}
}
where $\A_{\mm} y^\mm \coleq \A_{m_1 \cdots m_\N} y_1^{m_1} \cdots y_\N^{m_\N}$.
It is convenient to separate the $m=0$ term from the sum:
\eqntag{\label{211212180758}
	\A (z, \hbar, y) 
		= \A_{\bm{0}} + \sum_{m=1}^\infty \sum_{|\mm| = m} \A_{\mm} (z, \hbar) y^\mm
\fullstop{,}
}
where $\bm{0} = (0, \ldots, 0)$.
Then equation \eqref{241025192030} can be written on $\Omega_i \times S$ as
\eqntag{\label{211212180653}
	\hbar \del_z s_{i} - s_{i} 
		= \hbar \A_{\bm{0}} + \hbar \sum_{m=1}^\infty \sum_{|\mm| = m} \A_{\mm} (z, \hbar) s^{\mm}
\fullstop{,}
}
where $s^\mm \coleq s_{1}^{m_1} \cdots s_{\N}^{m_\N}$.

\paragraph*{Step 3: The Borel transform.}
Let $a_{\mm} = a_{\mm} (z)$ be the $\hbar$-leading-order part of $\A_{\mm}$ and let $\alpha_{\mm} (z, \xi) \coleq \Borel \big[ \A_{\mm} \big] (z, \xi)$.
By assumption, there is some real number $\delta > 0$ such that each $\alpha_{\mm}$ is a holomorphic function on $\Omega \times \Sigma$, where 
\eqntag{\label{211212181822}
	\Sigma \coleq \set{\xi ~\big|~ \op{dist} (\xi, \RR_+) < \delta }
		\subset \CC_\xi
\fullstop{,}
}
with uniformly at-most-exponential growth at infinity in $\xi$, and such that
\eqntag{\label{211206123333}
	\A_{\mm} (z, \hbar) = a_{\mm} (z) + \Laplace \big[\, \alpha_{\mm} \,\big] (z, \hbar)
}
for all $(z, \hbar) \in \Omega \times S$ provided that the radius $r$ of $S$ is sufficiently small.

Dividing equation \eqref{211212180653} by $\hbar$ and applying the (analytic) Borel transform, we obtain a nonlinear integro-differential equation on $\Omega \times \Sigma$, which on the component $\Omega_i \times \Sigma$ reads
\eqntag{\label{211212181832}
	\del_z \sigma^{i} - \del_\xi \sigma^{i} 
		= \alpha_{\bm{0}} 
		+ \sum_{m=1}^\infty \sum_{|\mm| = m}
			\Big( a_{\mm} \sigma^{\ast \mm} + \alpha_{\mm} \ast \sigma^{\ast \mm} \Big)
\fullstop{,}
}
where $\sigma^{\ast \mm} \coleq (\sigma^1)^{\ast m_1} \ast \cdots \ast (\sigma^\N)^{\ast m_\N}$ and the unknown variables $s_{i}$ and $\sigma^{i}$ are related by $\sigma^{i} = \Borel [s_{i}]$ and $s_{i} = \Laplace [\sigma^{i}]$.

\paragraph*{Step 4: The integral equation.}
The lefthand side of \eqref{211212181832} has constant coefficients, so it is easy to rewrite it as an equivalent integral equation as follows.
Consider the holomorphic change of variables
\eqn{
	(z, \xi) \overset{\T}{\mapstoo} (\zeta, t) \coleq (z + \xi, \xi)
\qtext{and its inverse}
	(\zeta, t) \overset{\T^{-1}}{\mapstoo} (z, \xi) = (\zeta - t, t)
\fullstop
}
Explicitly, for any function $\alpha = \alpha (z, \xi)$ of two variables,
\eqn{
	\T^\ast \alpha (z, \xi) \coleq \alpha \big( \T (z, \xi) \big) = \alpha (z + \xi, \xi)
\qtext{and}
	\T_\ast \alpha (\zeta, t) \coleq \alpha \big( \T^{-1} (\zeta, t) \big) = \alpha (\zeta - t, t)
\fullstop
}
Note that $\T^\ast \T_\ast \alpha = \alpha$.
Under this change of coordinates, the differential operator $\del_z - \del_\xi$ transforms into $- \del_t$, and so the lefthand side of \eqref{211212181832} becomes $- \del_t \big( \T_\ast \sigma^{i})$.
Integrating from $0$ to $t$, imposing the initial condition $\sigma^{i} (z, 0) = a_{\bm{0}} (z)$, and then applying $\T^\ast$, we convert equation \eqref{211212181832} into the following interal equation:
\eqntag{\label{211212182728}
	\sigma^{i} =
		a_{\bm{0}}
		- \T^\ast \int_0^t \T_\ast 
		\left(
			\alpha_{\bm{0}} 
			+ \sum_{m=1}^\infty \sum_{|\mm| = m}
			\Big( a_{\mm} \sigma^{\ast \mm} + \alpha_{\mm} \ast \sigma^{\ast \mm} \Big)
		\right)
		\d{u}
\fullstop
}
More explicitly, this integral equation reads as follows:
\begin{multline*}
	\sigma^{i} (z, \xi)
		= a_{\bm{0}} (z)
			- \int_0^\xi
			\Bigg[
				\alpha_{\bm{0}} (z + \xi - u, u)
\\			
			\mbox{}\qqqqqqqquad
				+ \sum_{m=1}^\infty \sum_{|\mm| = m}
				\Bigg( a_{\mm} (z + \xi - u, u) \cdot \sigma^{\ast \mm} (z + \xi - u, u) 
\\
				+ \big(\alpha_{\mm} \ast \sigma^{\ast \mm} \big) (z + \xi - u, u) \Bigg)
			\Bigg]
			\d{u}
\fullstop
\end{multline*}
Here, the integration is done along a straight line segment in $\Sigma$ from $0$ to $\xi$.
Note also that the convolution products are with respect to the second argument; i.e.,
\eqns{
	(\alpha \ast \alpha') (t_1, t_2)
		&= \int_0^{t_2} \alpha (t_1, t_2 - y) \alpha' (t_1, y) \d{y}
\fullstop{,}
\\
	(\alpha \ast \alpha' \ast \alpha'') (t_1, t_2)
		&= \int_0^{t_2} \alpha (t_1, t_2 - y) \int_0^{y} \alpha' (t_1, y - y') \alpha'' (t_1, y') \d{y'} \d{y}
\fullstop
}
Introduce the following notation: for any function $\alpha = \alpha (z, \xi)$ of two variables,
\eqntag{\label{211212183806}
	\I \big[ \alpha \big] (z, \xi) 
		\coleq - \T^\ast \int_0^t \T_\ast \alpha \d{u}
		= - \int_0^\xi	\alpha (z + \xi - u, u) 	\d{u}
		= \int_0^{\xi} \alpha (z + t, \xi - t) \d{t}
~,
}
where as before the integration path is the straight line segment connecting $0$ to $\xi$.
Then the system of integral equations \eqref{211212182728} can be written more succinctly as follows:
\eqntag{\label{211124152233}
	\sigma^{i}
		= a_{\bm{0}}
		+ \I 
		\left[ \alpha_{\bm{0}} 
			+ \sum_{m=1}^\infty \sum_{|\mm| = m}
			\Big( a_{\mm} \sigma^{\ast \mm} + \alpha_{\mm} \ast \sigma^{\ast \mm} \Big)
		\right]
\fullstop
}

\paragraph*{Step 5: Method of successive approximations.}
To solve this system, we use the method of successive approximations.
To this end, for each $i$, define a sequence of vector-valued holomorphic functions $\set{\sigma_{n} = (\sigma^1_n, \ldots, \sigma^\N_n) : \Omega \times \Sigma \to \CC^\N}_{n=0}^\infty$, as follows.
For every $i$ and all $z \in \Omega_i$,
\begin{gather}\label{211206181512}
	\sigma^i_{0} 
	\coleq a_{\bm{0}}
\fullstop{,}
\qqquad
	\sigma^i_{1}
	\coleq \I
		\left[ \alpha_{\mathbf{0}}
			+ \sum_{|\mm| = 1} a_\mm \sigma_{0}^\mm
		\right]
\fullstop{,}
\\
\label{211124133914}
	\sigma^i_{n}
	\coleq \I
		\left[
			\sum_{m=1}^n \sum_{|\mm| = m}
			\left(
				a_\mm \sum_{|\nn| = n - m} \bm{\sigma}^{\ast\mm}_{\nn} 
				+ \alpha_\mm \ast \sum_{|\nn| = n - m -1} \bm{\sigma}^{\ast\mm}_{\nn}
			\right)
		\right]
\quad \text{($n \geq 2$)\fullstop}
\end{gather}
For $z \in \Omega_j$ with $j \neq i$, we put $\sigma^i_n (z, \xi) \coleq \sigma^i_n \big(\phi_{ji}(z),\xi\big)$.
Here, $\sigma_{0}^\mm \coleq (\sigma^1_{0})^{m_1} \cdots (\sigma^\N_{0})^{m_\N}$, and we have also introduced the following shorthand notation: for any $\nn, \mm \in \Natural^\N$,
\eqntag{\label{211207111746}
	\bm{\sigma}_{\nn}^{\ast\mm}
	\coleq
		\left(
			\sum_{|\jj_1| = n_1}^{\jj_1 \in \Natural^{m_1}}
			\sigma^1_{j_{1,1}} \ast \cdots \ast \sigma^1_{j_{1,m_1}}
		\right)
			\ast
			\cdots
			\ast
		\left(
			\sum_{|\jj_\N| = n_\N}^{\jj_\N \in \Natural^{m_\N}}
			\sigma^\N_{j_{\N,1}} \ast \cdots \ast \sigma^\N_{j_{\N,m_\N}}
		\right)
\fullstop
}
Let us also note the following simple but useful identities: 
\begin{gather}
	\bm{\sigma}^{\bm{0}}_{\bm{0}} = 1\fullstop{;}
	\qquad
	\bm{\sigma}^{\bm{0}}_{\nn} = 0 \quad \text{whenever $|\nn| > 0$\fullstop{;}}
\\
\label{211215153152}
	\bm{\sigma}^{\mm}_{\bm{0}}
		= (\sigma^1_{0})^{\ast m_1} \ast \cdots \ast (\sigma^\N_{0})^{\ast m_\N}
		= \tfrac{1}{(m-1)!} \sigma_{0}^\mm \xi^{m-1}
\fullstop
\end{gather}

The crux of the argument is the following lemma.
\enlargethispage{10pt}

\begin{lem}{220802150038}
Fix $\epsilon > 0$ small enough such that $\Omega' \coleq \set{ z \in \Omega ~\big|~ \op{dist} (z, \RR_+) < \epsilon} \subset \Omega$.
If necessary, reduce the radius $\delta$ of the halfstrip $\Sigma = \set{ \xi ~\big|~ \op{dist} (\xi, \RR_+) < \delta}$ to make sure that $\delta < \epsilon$.
Put $\Omega_0 \coleq \set{ z ~\big|~ \op{dist} (z, \RR_+) < \epsilon - \delta}$.
Then the infinite series
\eqntag{\label{211212185410}
	\sigma (z, \xi) \coleq \sum_{n=0}^\infty \sigma_{n} (z, \xi)
}
defines a holomorphic solution of the integral equation \eqref{211124152233} on the domain
\eqn{
	\mathbf{\Omega} \coleq \set{ (z, \xi) \in \Omega' \times \Sigma ~\big|~ z + \xi \in \Omega'}
\fullstop{,}
}
and there are constants $\D, \K > 0$ such that
\eqntag{\label{211214194634}
	\big| \sigma (z, \xi) \big| \leq \D e^{\K |\xi|}
\qquad
\text{$\forall (z, \xi) \in \mathbf{\Omega}$\fullstop}
}
Furthermore, the formal Borel transform 
\eqntag{
	\hat{\sigma} (z, \xi) =
	\hat{\Borel} [ \, \hat{s} \, ] (z, \xi)
		= \sum_{n=0}^\infty \tfrac{1}{n!} s^\pto{n+1} (z) \xi^n
}
of the formal perturbative solution $\hat{s} (z, \hbar)$ of \eqref{241025192030} is the Taylor series expansion of $\sigma$ at $\xi = 0$.
In particular, $\sigma$ is a holomorphic vector function on $\Omega_0 \times \Sigma \subset \mathbf{\Omega}$ where it satisfies the exponential estimate \eqref{211214194634}.
\end{lem}

We will prove this lemma in the next subsection.
Let us explain how it completes the proof of \autoref{220801172208}.

\paragraph*{Step 6: Laplace transform.}
Assuming \autoref{220802150038}, only one step remains and that is to apply the Laplace transform to $\sigma$; so, we put
\eqntag{\label{211212191637}
	s (z, \hbar)
		\coleq \Laplace \big[ \, \sigma \, \big] (z, \hbar)
		= \int_0^{+\infty} e^{-\xi/\hbar} \sigma (z, \xi) \d{\xi}
\fullstop
}
Thanks to the exponential estimate \eqref{211214194634}, this Laplace integral is uniformly convergent for all $z \in \Omega_0$ provided that ${\Re (1/\hbar) > \K}$.
Thus, if we take $\delta \in [0, r)$ such that $1/(r - \delta) > \K$, then formula \eqref{211212191637} defines a holomorphic solution of differential equation \eqref{241025192030} on the domain $\Omega_0 \times S_0$ where $S_0 \coleq \set{ \hbar ~\big|~ \Re (1/\hbar) > 1 / (r-\delta) }$.
Furthermore, \hyperref[210617120300]{Nevanlinna's Theorem} implies that $s_i$ admits a uniform asymptotic expansion on $\Omega_0$ as $\hbar \to 0$ uniformly along $A$ with factorial type, and this asymptotic expansion is the formal perturbative solution $\hat{s}_i$ of differential equation \eqref{241025192030}.
\end{proof}

\subsection{Method of Successive Approximations: Proof of \autoref*{220802150038}}
\label{220809141652}

The only unresolved step remaining in the proof of \autoref{220725180003} (and hence of \autoref{220801172208}) is \autoref{220802150038}, which we prove now.

\begin{proof}[Proof of \autoref{220802150038}]
First, assuming that the infinite series $\sigma^i$ is uniformly convergent for all $(z, \xi) \in \mathbf{\Omega}$, we will verify in Step 1 that it satisfies the integral equation \eqref{211124152233} by direct substitution.
Then in Step 2 we will prove that $\sigma^i$ is uniformly convergent.

\paragraph*{Step 1: Solution check.}
The righthand side of \eqref{211124152233} becomes:
\eqntag{\label{211206194021}
	a_\mathbf{0} + \I
		\left[ \alpha_\mathbf{0}
			+ \BLUE{\sum_{m=1}^\infty \sum_{|\mm|=m} 
				a_\mm \left(\:\sum_{n=0}^\infty \sigma_n\right)^{\!\!\!\ast \mm}}
			+ \sum_{m=1}^\infty \sum_{|\mm|=m}
				\alpha_\mm \ast \left(\:\sum_{n=0}^\infty \sigma_n\right)^{\!\!\!\ast \mm}
		\right]
~,
}
where, using the notation introduced in \eqref{211207111746}, the $\mm$-fold convolution product of the infinite series $\sigma^i$ expands as follows:
\eqns{
	&\phantom{=}~~
	\left(\:\sum_{n=0}^\infty \sigma_n\right)^{\!\!\!\ast \mm}
\\		
		&= 	\left(\:
				\sum_{n_1=0}^\infty \sigma_{n_1}^1
			\right)^{\!\!\!\ast m_1} \!\!\!\!\!\!\!
			\ast \cdots \ast
			\left(\:
				\sum_{n_\N=0}^\infty \sigma_{n_\N}^\N
			\right)^{\!\!\!\ast m_\N}
\\
		&= 	\left(\:
				\sum_{n_1=0}^\infty \sum_{|\jj_1| = n_1}^{\jj_1 \in \Natural^{m_1}}
				\sigma^1_{j_{1,1}} \ast \cdots \ast \sigma^1_{j_{1,m_1}}
			\right)
			\ast \cdots \ast
			\left(\:
				\sum_{n_\N=0}^\infty \sum_{|\jj_\N| = n_\N}^{\jj_\N \in \Natural^{m_\N}}
				\sigma^\N_{j_{\N,1}} \ast \cdots \ast \sigma^\N_{j_{\N,m_\N}}
			\right)
\\
		&=	\sum_{n=0}^\infty \sum_{|\nn|=n}
			\left(
				\sum_{|\jj_1| = n_1}^{\jj_1 \in \Natural^{m_1}}
				\sigma^1_{j_{1,1}} \ast \cdots \ast \sigma^1_{j_{1,m_1}}
			\right)
				\ast
				\cdots
				\ast
			\left(
				\sum_{|\jj_\N| = n_\N}^{\jj_\N \in \Natural^{m_\N}}
				\sigma^\N_{j_{\N,1}} \ast \cdots \ast \sigma^\N_{j_{\N,m_\N}}
			\right)
\\
		&= 	\sum_{n=0}^\infty \sum_{|\nn|=n}
			\bm{\sigma}_\nn^\mm
\fullstop
}
Use this to rewrite the \BLUE{blue} terms in \eqref{211206194021}, separating out first the \GREEN{$m=1$ part} and then the \ORANGE{$(m,n)=(1,1)$ part} using identity \eqref{211215153152}:
\begin{flalign*}
	&\phantom{=}~~~
	\BLUE{\sum_{m=1}^\infty \sum_{|\mm|=m} 
				a_\mm \left(\:\sum_{n=0}^\infty \sigma_n\right)^{\!\!\!\ast \mm}}
	\!\!\!\!
\\
	&=
		\GREEN{\sum_{|\mm|=1} 
				a_\mm 
				\sum_{n=0}^\infty \sum_{|\nn|=n}
				\bm{\sigma}_\nn^\mm
				}
		+ \sum_{m=2}^\infty \sum_{|\mm|=m} 
				a_\mm
				\sum_{n=0}^\infty \sum_{|\nn|=n}
				\bm{\sigma}_\nn^\mm
\\
	&=
		\ORANGE{\sum_{|\mm|=1} 
				a_\mm \sigma_0^\mm
				}
				+
				\sum_{|\mm|=1} 
				a_\mm 
				\sum_{n=1}^\infty \sum_{|\nn|=n}
				\bm{\sigma}_\nn^\mm
		+ \sum_{m=2}^\infty \sum_{|\mm|=m} 
				a_\mm
				\sum_{n=0}^\infty \sum_{|\nn|=n}
				\bm{\sigma}_\nn^\mm
\fullstop
\end{flalign*}
Substituting this back into \eqref{211206194021} and using \eqref{211206181512}, we find:
\begin{multline}
\label{211207125028}
	\sigma^i_0 + \ORANGE{\sigma^i_1} + \I
		\left[
			\sum_{|\mm|=1} 
				a_\mm 
				\sum_{n=1}^\infty \sum_{|\nn|=n}
				\bm{\sigma}_\nn^\mm
			+ \sum_{m=2}^\infty \sum_{|\mm|=m} 
				a_\mm
				\sum_{n=0}^\infty \sum_{|\nn|=n}
				\bm{\sigma}_\nn^\mm
		\right.
\\
		\left.
			+ \sum_{m=1}^\infty \sum_{|\mm|=m}
				\alpha_\mm \ast 
				\sum_{n=0}^\infty \sum_{|\nn|=n}
				\bm{\sigma}_\nn^\mm
		\right]
\fullstop
\end{multline}
The goal is to show that the integral in \eqref{211207125028} is equal to $\sum_{n \geq 2} \sigma^i_n$.
Focus on the expression inside the integral:
\eqn{
	{\sum_{|\mm|=1} \!\!
		a_\mm 
		\sum_{n=1}^\infty \sum_{|\nn|=n} \!\!
		\bm{\sigma}_\nn^\mm}
	+ \BLUE{\sum_{m=2}^\infty \sum_{|\mm|=m} \!\!
		a_\mm
		\sum_{n=0}^\infty \sum_{|\nn|=n} \!\!
		\bm{\sigma}_\nn^\mm}
	+ \GREEN{\sum_{m=1}^\infty \sum_{|\mm|=m} \!\!\!
		\alpha_\mm \ast \!
		\sum_{n=0}^\infty \sum_{|\nn|=n} \!\!
		\bm{\sigma}_\nn^\mm}
\fullstop
}
Shift the summation index $n$ up by $1$ in the black sum, by $m$ in the \BLUE{blue} sum, and by $m+1$ in the \GREEN{green} sum (shifted indices are highlighted in \ORANGE{orange}):
\begin{adjustwidth}{-1cm}{-1cm}
\eqn{
	{\sum_{|\mm|=1} \!\!
		a_\mm 
		\sum_{n=\ORANGE{2}}^\infty \sum_{|\nn|=\ORANGE{n-1}} \!\!\!\!
		\bm{\sigma}_\nn^\mm}
	+ \BLUE{\sum_{m=2}^\infty \sum_{|\mm|=m} \!\!\!
		a_\mm
		\sum_{n=\ORANGE{m}}^\infty \sum_{|\nn|=\ORANGE{n-m}} \!\!\!\!\!
		\bm{\sigma}_\nn^\mm}
	+ \GREEN{\sum_{m=1}^\infty \sum_{|\mm|=m} \!\!\!
		\alpha_\mm \ast \!\!\!\!
		\sum_{n=\ORANGE{m+1}}^\infty \sum_{|\nn|=\ORANGE{n-m-1}} \!\!\!\!\!\!
		\bm{\sigma}_\nn^\mm}
\fullstop
}
\end{adjustwidth}
Notice that all terms in the \BLUE{blue} sum with $n < m$ are zero, so we can start the summation over $n$ from $n = 2$ (which is the lowest possible value of $m$) without altering the result.
Similarly, all terms in the \GREEN{green} sum with $n < m + 1$ are zero, so we may as well start from $n = 2$.
The black sum is left unaltered.
Thus, we get:
\begin{adjustwidth}{-1cm}{-1cm}
\eqn{
	{\sum_{|\mm|=1} \!\!
		a_\mm 
		\sum_{n=2}^\infty \sum_{|\nn|=n-1} \!\!\!\!
		\bm{\sigma}_\nn^\mm}
	+ \BLUE{\sum_{m=2}^\infty \sum_{|\mm|=m} \!\!\!
		a_\mm
		\sum_{n=\ORANGE{2}}^\infty \sum_{|\nn|={n-m}} \!\!\!\!\!
		\bm{\sigma}_\nn^\mm}
	+ \GREEN{\sum_{m=1}^\infty \sum_{|\mm|=m} \!\!\!
		\alpha_\mm \ast \!
		\sum_{n=\ORANGE{2}}^\infty \sum_{|\nn|={n-m-1}} \!\!\!\!\!\!
		\bm{\sigma}_\nn^\mm}
\fullstop
}
\end{adjustwidth}
The advantage of this way of expressing the sums is that we can now interchange the summations over $m$ and $n$ to obtain:
\eqn{
	\ORANGE{\sum_{n=2}^\infty}
	\left\{
		{\sum_{|\mm|=1} \!
			a_\mm \!\!\!
			\sum_{|\nn|=n-1} \!\!\!\!
			\bm{\sigma}_\nn^\mm}
		+ \BLUE{\sum_{m=2}^\infty \sum_{|\mm|=m} \!\!\!
			a_\mm \!\!\!
			\sum_{|\nn|={n-m}} \!\!\!\!\!
			\bm{\sigma}_\nn^\mm}
		+ \GREEN{\sum_{m=1}^\infty \sum_{|\mm|=m} \!\!\!
			\alpha_\mm \ast  \!\!\!\!\!
			\sum_{|\nn|={n-m-1}} \!\!\!\!\!\!
			\bm{\sigma}_\nn^\mm}
	\right\}
\fullstop
}
Observe that the black sum fits well into the \BLUE{blue} sum over $m$ to give the $m=1$ term.
So we get:
\vspace{-5pt}
\eqn{
	\sum_{n=2}^\infty \sum_{m=1}^\infty \sum_{|\mm|=m}
	\left\{
		\BLUE{	a_\mm \!\!\!
			\sum_{|\nn|={n-m}} \!\!\!\!\!
			\bm{\sigma}_\nn^\mm}
		+ \GREEN{
			\alpha_\mm \ast  \!\!\!\!\!
			\sum_{|\nn|={n-m-1}} \!\!\!\!\!\!
			\bm{\sigma}_\nn^\mm}
	\right\}
\fullstop
}
Finally, notice that both sums are empty for $m > n$, so we get:
\eqn{
	\sum_{n=2}^\infty \sum_{m=1}^{\ORANGE{n}} \sum_{|\mm|=m}
	\left\{
		\BLUE{	a_\mm \!\!\!
			\sum_{|\nn|={n-m}} \!\!\!\!\!
			\bm{\sigma}_\nn^\mm}
		+ \GREEN{
			\alpha_\mm \ast  \!\!\!\!\!
			\sum_{|\nn|={n-m-1}} \!\!\!\!\!\!
			\bm{\sigma}_\nn^\mm}
	\right\}
\fullstop
}
The sum over $m$ is precisely the expression inside the integral in \eqref{211124133914} defining $\sigma_n^i$.
This shows that $\sigma$ satisfies the integral equation \eqref{211124152233}.

\paragraph*{Step 2: Convergence.}
Now we show that each $\sigma^i$ is a uniformly convergent series on $\mathbf{\Omega}$ and therefore defines a holomorphic function.
In the process, we also establish the estimate \eqref{211214194634}.

Let $\B, \C, \L > 0$ be such that for all $(z, \xi) \in \mathbf{\Omega}$ and all $\mm \in \Natural^\N$,
\eqntag{\label{211207142232}
	\big| a_\mm (z) \big| \leq \rho_m \C \B^{m}
\qqtext{and}
	\big| \alpha_\mm (z, \xi) \big| \leq \rho_m \C \B^{m} e^{\L |\xi|}
\fullstop{,}
}
where $m = |\mm|$ and $\rho_m$ is the normalisation constant \eqref{211209172628}.
We claim that there are constants $\D, \M > 0$ such that for all $(z, \xi) \in \mathbf{\Omega}$ and all $n \in \Natural$,
\eqntag{\label{211207142237}
	\big| \sigma^i_n (z, \xi) \big| \leq \D \M^n \frac{|\xi|^n}{n!} e^{\L |\xi|}
\fullstop
}
If we achieve \eqref{211207142237}, then the uniform convergence and the exponential estimate \eqref{211214194634} both follow at once because
\eqn{
	\big| \sigma^i (z, \xi) \big|
		\leq \sum_{n=0}^\infty \big| \sigma_n^i (z, \xi) \big|
		\leq \sum_{n=0}^\infty \D \M^n \frac{|\xi|^n}{n!} e^{\L |\xi|}
		\leq \D e^{(\M + \L) |\xi|}
\fullstop
}
To demonstrate \eqref{211207142237}, we proceed in two steps.
First, we construct a sequence of positive real numbers $\set{\M_n}_{n=0}^\infty$ such that for all $n \in \Natural$ and all $(z, \xi) \in \mathbf{\Omega}$,
\eqntag{\label{211215185429}
	\big| \sigma_n^i (z, \xi) \big| \leq \M_n \frac{|\xi|^n}{n!} e^{\L |\xi|}
\fullstop
}
We will then show that there are constants $\D, \M$ such that $\M_n \leq \D \M^n$ for all $n$.

\enlargethispage{20pt}
\paragraph*{Step 2.1: Construction of $\set{\M_n}$.}
We can take $\M_0 \coleq \C$ and $\M_1 \coleq \C (1 + \B \M_0)$ because $\sigma_0^i = a_{\bm{0}}$ and
\eqns{
	\big| \sigma_1^i \big|
	&\leq \int_0^\xi 
		\left( 
			|\alpha_{\bm{0}}| 
			+ \sum_{|\mm| = 1} |a_\mm| |\sigma_0^\mm|
		\right) |\d{t}|
	\leq \int_0^\xi
		\left( 
			\C e^{\L |t|}
			+ \C^2 \B \rho_1 \sum_{|\mm| = 1} 1
		\right) |\d{t}|
\\
	&\leq \C (1 + \B \M_0) \int_0^{|\xi|} e^{\L s} \d{s}
	\leq \C (1 + \B \M_0) |\xi| e^{\L |\xi|}
\fullstop{,}
}
where in the final step we used the following elementary fact whose proof is omitted; e.g., see \cite[Appendix C.4]{MY2008.06492}.

\begin{lem}{180824194855}
For any $\R \geq 0$, any $\L \geq 0$, and any nonnegative integer $n$,
\eqn{
	\int_0^{\R} \frac{r^n}{n!} e^{\L r} \d{r}
		\leq \frac{\R^{n+1}}{(n+1)!} e^{\L \R}
\fullstop\vspace{-5pt}
}
\end{lem}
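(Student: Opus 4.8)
The plan is to reduce the claim to a trivial polynomial integral by controlling the exponential factor by its largest value on the interval of integration. Since $\LL \geq 0$, the function $r \mapsto e^{\LL r}$ is nondecreasing, so $e^{\LL r} \leq e^{\LL \RR}$ for every $r \in [0, \RR]$; as the remaining factor $r^n / n!$ is nonnegative there, I would pull the constant $e^{\LL \RR}$ out of the integral to obtain
\eqn{
	\int_0^{\RR} \frac{r^n}{n!} e^{\LL r} \dd{r}
		\leq e^{\LL \RR} \int_0^{\RR} \frac{r^n}{n!} \dd{r}
		= e^{\LL \RR} \frac{\RR^{n+1}}{(n+1)!}
\fullstop{,}
}
which is exactly the asserted bound. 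The edge case $\RR = 0$ is vacuous, as both sides vanish.

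There is no genuine obstacle here: the only point worth flagging is that the hypothesis $\LL \geq 0$ is precisely what licenses the monotonicity step (if $\LL$ were negative one would instead use the crude bound $e^{\LL r} \leq 1$). One could alternatively prove the estimate by induction on $n$ via integration by parts, which would reproduce the factor $e^{\LL \RR}$ together with lower-order corrections of the correct sign, but the monotonicity argument is shorter and perfectly adequate for the uses of this lemma elsewhere in the paper. For the vector-valued applications one simply applies the scalar inequality to the Euclidean norm of the integrand, as in the conventions fixed in \autoref{211215123326}.
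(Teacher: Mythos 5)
Your proof is correct and is the natural one-line argument; the paper itself states only that the proof is straightforward and defers to an external appendix, so there is nothing to contrast. Bounding $e^{\LL r}$ by its maximum $e^{\LL \RR}$ on $[0,\RR]$ (valid precisely because $\LL \geq 0$) and integrating the polynomial is exactly the intended reasoning.
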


Now, let us assume that we have already constructed the constants $\M_0, \ldots, \M_{n-1}$ such that $|\sigma_k^i| \leq \M_k \frac{|\xi|^k}{k!} e^{\L |\xi|}$ for all $k = 0, \ldots, n-1$ and all $i = 1, \ldots, \N$.
Then, in order to derive an estimate for $\sigma_n$, we use formula \eqref{211124133914} in conjunction with the following elementary estimate on the convolution product whose proof is also omitted; e.g., see \cite[Appendix C.4]{MY2008.06492}.

\begin{lem}{211205075846}
Let $j_1, \ldots, j_m$ be nonnegative integers and put $n \coleq j_1 + \cdots + j_m$.
Let $f_{j_1}, \ldots, f_{j_m}$ be holomorphic functions on $\Sigma \coleq \set{\xi ~\big|~ \op{dist} (\xi, \RR_+) < \epsilon}$ for some $\epsilon > 0$.
If there are constants $\M_{j_1}, \ldots, \M_{j_m}, \L \geq 0$ such that
\eqn{
	\big| f_{j_i} (\xi) \big| \leq \M_{j_i} \frac{|\xi|^{j_i}}{j_i!} e^{\L |\xi|}
\rlap{\qquad $\forall \xi \in \Sigma$\fullstop{,}}
}
then their total convolution product satisfies the following bound:
\eqn{
	\big| f_{j_1} \ast \cdots \ast f_{j_m} (\xi) \big| 
		\leq \M_{j_1} \cdots \M_{j_m} \frac{|\xi|^{n+m-1}}{(n+m-1)!} e^{\L |\xi|}
\rlap{\qqquad $\forall \xi \in \Sigma$\fullstop}
}
\end{lem}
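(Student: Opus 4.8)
The plan is to prove this by induction on the number $m$ of convolution factors, using associativity of the convolution product. The base case $m = 1$ requires nothing: there $n = j_1$, so $n + m - 1 = j_1$, and the asserted inequality is literally the hypothesis on $f_{j_1}$. For the inductive step, assume the estimate for any $m - 1$ factors and peel off the first one, writing $f_{j_1} \ast \cdots \ast f_{j_m} = f_{j_1} \ast F$ with $F \coleq f_{j_2} \ast \cdots \ast f_{j_m}$ and $n' \coleq j_2 + \cdots + j_m = n - j_1$. Applying the inductive hypothesis to $F$ gives, for all $\xi \in \Xi$,
\eqn{
	\big| F (\xi) \big| \leq \MM_{j_2} \cdots \MM_{j_m} \, \frac{|\xi|^{n' + m - 2}}{(n' + m - 2)!} \, e^{\LL |\xi|}
\fullstop{,}
}
because $n' + (m - 1) - 1 = n - j_1 + m - 2$; so it remains only to estimate the single convolution $f_{j_1} \ast F$.

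This is the one computation worth spelling out. By definition $f_{j_1} \ast F (\xi) = \int_0^\xi f_{j_1}(\xi - y) \, F(y) \, \dd{y}$, with the integration along the straight segment from $0$ to $\xi$; parametrising by $y = s \, e^{i \arg \xi}$ with $s \in [0, |\xi|]$, one has $|y| = s$, $|\xi - y| = |\xi| - s$ and $|\dd{y}| = \dd{s}$, so that $|\xi - y| + |y| = |\xi|$ and the two exponential weights collapse to the constant factor $e^{\LL (|\xi| - s)} e^{\LL s} = e^{\LL |\xi|}$, which comes out of the integral. What remains is
\eqn{
	\big| f_{j_1} \ast F (\xi) \big| \leq \MM_{j_1} \cdots \MM_{j_m} \, e^{\LL |\xi|} \int_0^{|\xi|} \frac{(|\xi| - s)^{j_1}}{j_1!} \, \frac{s^{n' + m - 2}}{(n' + m - 2)!} \, \dd{s}
\fullstop
}
The integral here is an Euler Beta integral: substituting $s = |\xi| u$ and using $\int_0^1 (1 - u)^p u^q \dd{u} = p!\, q! / (p + q + 1)!$ yields $\int_0^R \frac{(R - s)^p}{p!} \frac{s^q}{q!} \dd{s} = R^{p + q + 1} / (p + q + 1)!$, and with $p = j_1$ and $q = n' + m - 2 = n - j_1 + m - 2$ the exponent is $p + q + 1 = n + m - 1$. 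Hence the integral equals $|\xi|^{n + m - 1} / (n + m - 1)!$, which is precisely the claimed bound, and the induction is complete.

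There is no genuine obstacle here: the statement is elementary, which is why it is recorded in the main text without proof (with a pointer to \cite[Appendix C.4]{MY2008.06492}). The only two points that need any care are the elementary identity $|\xi - y| + |y| = |\xi|$ along the straight integration segment — which is exactly what makes the exponential weights collapse rather than grow — and the Beta-function identity used to evaluate the final integral, which, if one prefers to avoid it, can be replaced by a short secondary induction or by $j_1$-fold integration by parts.
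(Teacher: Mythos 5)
Your proof is correct, and since the paper itself does not supply a proof of this lemma (it merely points to \cite[Appendix C.4]{MY2008.06492}), there is nothing in the paper to diverge from; your argument — induction on the number of factors, the key geometric fact that $|\xi - y| + |y| = |\xi|$ along the straight integration segment so the exponential weights collapse, and the Beta-integral identity $\int_0^{R}\tfrac{(R-s)^p}{p!}\tfrac{s^q}{q!}\,\dd{s} = \tfrac{R^{p+q+1}}{(p+q+1)!}$ — is the standard and essentially unique way to prove such a convolution estimate. The only unstated hypothesis you are quietly using (correctly) is that the straight segment from $0$ to $\xi$ lies inside $\Xi$ whenever $\xi \in \Xi$, which does hold for the tubular neighbourhood $\Xi = \{\xi : \op{dist}(\xi,\Real_+) < \epsilon\}$ of the ray and is worth a one-line remark.
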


\vspace{-5pt}
\enlargethispage{20pt}

First, let us write down an estimate for $\bm{\sigma}_\nn^\mm$ using formula \eqref{211207111746}.
Thanks to \autoref{211205075846}, we have for each $i = 1, \ldots, \N$ and all $n_i,m_i$:
\eqn{
	\sum_{|\jj_i| = n_i}^{\jj_i \in \Natural^{m_i}}
			\Big|
				\sigma^i_{j_{i,1}} \ast \cdots \ast \sigma^i_{j_{i,m_i}}
			\Big|
	\leq \sum_{|\jj_i| = n_i}^{\jj_i \in \Natural^{m_i}}  \!\!
			\M_{j_{i,1}} \cdots \M_{j_{i,m_i}}
			\frac{|\xi|^{n_i + m_i - 1}}{(n_i + m_i - 1)!} e^{\L |\xi|}
\fullstop
}
Then, for all $\nn,\mm \in \Natural^\N$, using the shorthand introduced in \eqref{211209180714},
\eqntag{
	\big| \bm{\sigma}^\mm_\nn \big|
		\leq
			\bm{\M}^\mm_\nn
			\frac{|\xi|^{|\nn| + |\mm| - 1}}{(|\nn| + |\mm| - 1)!} e^{\L |\xi|}
\fullstop
}
Therefore, formula \eqref{211124133914} gives the following estimate:
\eqns{
	|\sigma_n^i|
	&\leq \int_0^\xi \sum_{m=1}^n \sum_{|\mm|=m}
		\left\{ |a_\mm| \!\! \sum_{|\nn|=n-m} \!\!\!
			\big| \bm{\sigma}_\nn^\mm \big|  
			+ \!\!\!\! \sum_{|\nn|=n-m-1} \!\!\!\!\!\!
			\big| \alpha_\mm^i \ast \bm{\sigma}_\nn^\mm \big|
		\right\} |\d{t}|
\\	
	&\leq \sum_{m=1}^n \sum_{|\mm|=m}
		\left\{
			\rho_m \C \B^m \!\! \sum_{|\nn|=n-m} \!\!\! \bm{\M}^\mm_\nn
			+ \rho_m \C \B^m 
				\!\!\!\!\! \sum_{|\nn|=n-m-1} \!\!\!\!\!\! \bm{\M}^\mm_\nn
		\right\}
		\int_0^\xi \frac{|t|^{n - 1}}{(n - 1)!} e^{\L |t|} |\d{t}|
\\
	&\leq \sum_{m=1}^n
		\rho_m \C \B^m
		\sum_{|\mm|=m}
		\left\{
			\sum_{|\nn|=n-m} \!\!\! \bm{\M}^\mm_\nn
			+ \!\!\! \sum_{|\nn|=n-m-1} \!\!\!\!\!\! \bm{\M}^\mm_\nn
		\right\}
		\frac{|\xi|^n}{n!} e^{\L |\xi|}
\fullstop
}
Thus, this expression allows us to define the constant $\M_n$ for $n \geq 2$.
In fact, a quick glance at this formula reveals that it can be extended to $n = 0, 1$ by defining
\eqntag{\label{211207174849}
	\M_n \coleq
		\sum_{m=0}^n
			\rho_m \C \B^m
		\sum_{|\mm|=m}
		\left\{
			\sum_{|\nn|=n-m} \!\!\! \bm{\M}^\mm_\nn
			+ \!\!\! \sum_{|\nn|=n-m-1} \!\!\!\!\!\! \bm{\M}^\mm_\nn
		\right\}
\qqquad
	\forall n \in \Natural
\fullstop
}
Indeed, if $m = 0$, then the two sums inside the brackets can only possibly be nonzero when $n = 0$, in which case the second sum is empty and the first sum is $1$, so we recover $\M_0 = \C$.
Likewise, if $n = 1$, then the $m = 0$ term is $0 + \C$ and the $m=1$ term is $\C \B \M_0 + 0$, so again we recover the constant $\M_1$ defined previously.

\paragraph*{Step 2.2: Bounding $\M_n$.}
To see that $\M_n \leq \D \M^n$ for some $\D, \M > 0$, consider the following two power series in an abstract variable $t$:
\eqntag{\label{211207181737}
	\hat{p} (t) \coleq \sum_{n=0}^\infty \M_n t^n
\qqtext{and}
	\Q (t) 
		\coleq \sum_{m=0}^\infty \C \B^m t^m
\fullstop
}
Notice that $\Q (t)$ is convergent and $\Q (0) = \C = \M_0$.
We will show that $\hat{p} (t)$ is also a convergent power series.
The key observation is that $\hat{p}$ satisfies the following functional equation:
\eqntag{\label{211207181733}
	\hat{p} (t) = (1+t) \Q \big( t \hat{p} (t) \big)
\fullstop
}
This equation was found by trial and error.
In order to verify it, we rewrite the power series $\Q (t)$ in the following way:
\eqntag{
	\Q (t)	= \sum_{m=0}^\infty \sum_{|\mm|=m} \!\!
			\rho_m \C \B^m t^\mm
\fullstop
}
Then \eqref{211207181733} is straightforward to verify by direct substitution and comparing the coefficients of $t^n$ using the defining formula \eqref{211207174849} for $\M_n$.
Thus, the righthand side of \eqref{211207181733} expands as follows:
\eqns{
	&\phantom{=}~~
	(1+t) \sum_{m=0}^\infty \sum_{|\mm|=m} \!\!
		\rho_m \C \B^m 
		\left( t \sum_{n=0}^\infty \M_n t^n \right)^{\!\!\mm}
\\
	&= (1+t) \sum_{m=0}^\infty \sum_{|\mm|=m} \!\!
		\rho_m \C \B^m t^m
		\left( \sum_{n_1=0}^\infty \M_{n_1} t^{n_1} \right)^{\!\!m_1} \!\!\!\!
		\cdots
		\left( \sum_{n_\N=0}^\infty \M_{n_\N} t^{n_\N} \right)^{\!\!m_\N}
\\
	&= (1+t) \sum_{m=0}^\infty \sum_{|\mm|=m} \!\!
		\rho_m \C \B^m
		\sum_{n=0}^\infty \sum_{|\nn|=n} \bm{\M}^\mm_\nn t^{n+m}
\\
	&= (1+t) \sum_{m=0}^\infty \sum_{|\mm|=m} \!\!
		\rho_m \C \B^m
		\sum_{n=0}^\infty
		\sum_{|\nn|=\ORANGE{n-m}} \bm{\M}^\mm_\nn t^{\ORANGE{n}}
\\
	&= \sum_{n=0}^\infty \sum_{m=0}^\infty
		\rho_m \C \B^m \!\!
		\sum_{|\mm|=m}
		\left\{
			\sum_{|\nn|=n-m} \bm{\M}^\mm_\nn t^{n}
			+ \sum_{|\nn|=n-m} \bm{\M}^\mm_\nn t^{n+1}
		\right\}
\\
	&= \sum_{n=0}^\infty \sum_{m=0}^{\ORANGE{n}}
		\rho_m \C \B^m \!\!
		\sum_{|\mm|=m}
		\left\{
			\sum_{|\nn|=n-m} \bm{\M}^\mm_\nn
			+ \sum_{|\nn|=\ORANGE{n-m-1}} \bm{\M}^\mm_\nn
		\right\}
		t^{n}
\fullstop
}
In the final equality, we once again noticed that both sums inside the curly brackets are zero whenever $m > n$.

Now, consider the following holomorphic function in two variables $(t,p)$:
\eqntag{
	\H (t, p) \coleq - p + (1+t) \Q (tp)
\fullstop
}
It has the following properties:
\eqn{
	\H (0, \C) = 0
\qqtext{and}
	\evat{\frac{\del \H}{\del p}}{(t,p) = (0, \C)} = - 1 \neq 0
\fullstop
}
By the Holomorphic Implicit Function Theorem, there exists a unique holomorphic function $p(t)$ near $t = 0$ such that $p(0) = \C$ and $\H \big(t, p(t)\big) = 0$.
Therefore, $\hat{p} (t)$ must be the convergent Taylor series expansion of $p(t)$ at $t = 0$, so its coefficients grow at most exponentially: i.e., there are constants $\D, \M > 0$ such that $\M_n \leq \D \M^n$.
This completes the proof of \autoref{220802150038} and hence of \autoref{220725180003} and \autoref{220801172208}.
\end{proof}

\begin{appendices}
\appendixsectionformat

\section{Background Information}
\label{211215112252}

Our notation, conventions, and definitions from asymptotics of factorial type and Borel-Laplace theory are consistent with those given in Appendices A and B in \cite{MY2008.06492}, with the main exception that these references use the terminology ``Gevrey asymptotics'' instead of ``asymptotics of factorial type''.
Here, we give a brief summary to make this paper self-contained.

\subsection{Asymptotics of Factorial Type}
\label{211215123326}

A \dfn{sector} at the origin in $\CC$ is any simply connected domain $S \subset \CC^\ast = \CC \setminus \set{0}$ such that there is a (necessarily unique) simply connected open subset $\tilde{S}$ in the real-oriented blowup $[\CC : 0] = \CC^\ast \sqcup \SS^1$ which intersects the boundary circle $\SS^1$ in an open arc $A \subset \SS^1$.
The arc $A$ is called the \dfn{opening} of $S$, and its length $|A|$ is called the \dfn{opening angle} of $S$.
A \dfn{Borel sector} of \textit{radius} $r >0$ is the sector $S = \set{ \hbar \in \CC_\hbar ~\big|~ \Re (1/\hbar) > 1/r }$.
Its opening is $A = (-\tfrac{\pi}{2}, + \tfrac{\pi}{2})$.
Likewise, a Borel sector bisected by a direction $\theta \in \SS^1$ is the sector $S = \set{ \hbar \in \CC_\hbar ~\big|~ \Re (e^{i\theta}/\hbar) > 1/r }$.
Its opening is $A = (\theta -\tfrac{\pi}{2}, \theta + \tfrac{\pi}{2})$.

A holomorphic function $f (\hbar)$ on a sector $S$ is admits a power series $\hat{f} (\hbar)$ as its \dfn{asymptotic expansion as $\hbar \to 0$ along $A$} (or \dfn{as $\hbar \to 0$ in $S$}) if, for every $n \geq 0$ and every compactly contained subarc $A_0 \Subset A$, there is a sectorial subdomain $S_0 \subset S$ with opening $A_0$ and a real constant $\C_{n,0} > 0$ such that
\eqntag{\label{200720153758}
	\left| f(\hbar) - \sum_{k=0}^{n-1} f^\pto{k} \hbar^k \right| \leq \C_{n,0} |\hbar|^n
}
for all $\hbar \in S_0$.
The constants $\C_{n,0}$ may depend on $n$ and the opening $A_0$.
If this is the case, we write
\eqntag{\label{200720175735}
	f (\hbar) \sim \hat{f} (\hbar)
\qqqquad
	\text{as $\hbar \to 0$ along $A$\fullstop}
}
If the constants $\C_{n,0}$ in \eqref{200720153758} can be chosen uniformly for all compactly contained subarcs $A_0 \Subset A$ (i.e., independent of $A_0$ so that $\C_{n,0} = \C_n$ for all $n$), then we write
\eqntag{\label{210220160756}
	f (\hbar) \sim \hat{f} (\hbar)
\qqqquad
	\text{as $\hbar \to 0$ unif. along $A$\fullstop}
}

We also say that the holomorphic function $f$ admits $\hat{f}$ as an \dfn{asymptotic expansion as $\hbar \to 0$ along $A$ with factorial type} if the constants $\C_{n,0}$ in \eqref{200720153758} depend on $n$ like $\C_0 \M_0^n n!$.
More explicitly, for every compactly contained subarc $A_0 \Subset A$, there is a sector $S_0 \subset S$ with opening $A_0 \Subset A$ and real constants $\C_0, \M_0 > 0$ which give the bounds
\eqntag{\label{200722160158}
	\left| f(\hbar) - \sum_{k=0}^{n-1} f^\pto{k} \hbar^k \right| \leq \C_0 \M_0^n n! |\hbar|^n
}
for all $\hbar \in S_0$ and all $n \geq 0$.
In this case, we write
\eqntag{\label{210225131044}
	f (\hbar) \simeq \hat{f} (\hbar)
\qqqquad
	\text{as $\hbar \to 0$ along $A$\fullstop}
}
If in addition to \eqref{200722160158}, the constants $\C_0, \M_0$ can be chosen uniformly for all $A_0 \Subset A$, then we will write
\eqntag{\label{210225134416}
	f (\hbar) \simeq \hat{f} (\hbar)
\qqqquad
	\text{as $\hbar \to 0$ unif. along $A$\fullstop}
}

A formal power series $\hat{f} (\hbar) = \sum f^\pto{n} \hbar^n$ is of \dfn{factorial type} if there are constants $\C, \M > 0$ such that for all $n \geq 0$,
\eqntag{\label{200723182724}
	| f^\pto{n} | \leq \C \M^n n!
\fullstop
}

All the above definitions translate immediately to cover vector-valued holomorphic functions on $S$ by using, say, the Euclidean norm in all the above estimates.
In fact, it is a classical fact that for finite-dimensional vector-valued functions, these definitions are independent of the choice of norm.

\subsection{Borel-Laplace Theory}
\label{211215123550}

Let $\Xi_\theta \coleq \set{ \xi \in \CC_\xi ~\big|~ \op{dist} (\xi, e^{i\theta}\RR_+) < \epsilon}$, where $e^{i \theta} \RR_+$ is the real ray in the direction $\theta$.
Let $\phi = \phi (\xi)$ be a holomorphic function on $\Xi_\theta$.
Its \dfn{Laplace transform} in the direction $\theta$ is defined by the formula:
\eqntag{\label{200624181217}
	\Laplace_\theta [\, \phi \,] (\hbar)
		\coleq \int\nolimits_{e^{i\theta} \RR_+} \phi (\xi) e^{-\xi/\hbar} \d{\xi}
\fullstop
}
When $\theta = 0$, we write simply $\Laplace$.
Clearly, $\phi$ is Laplace-transformable in the direction $\theta$ if $\phi$ has \dfn{at-most-exponential growth} as $|\xi| \to + \infty$ along the ray $e^{i\theta} \RR_+$.
Explicitly, this means there are constants $\A, \L > 0$ such that for all $\xi \in \Xi_\theta$,
\eqntag{\label{211215140908}
	\big| \phi (\xi) \big| \leq \A e^{\L |\xi|}
\fullstop
}

The convolution product of two holomorphic functions $\phi, \psi$ is defined by the following formula:
\eqntag{
	\phi \ast \psi (\xi)
	\coleq 
	\int\nolimits_0^\xi \phi (\xi - y) \psi (y) \d{y}
\fullstop{,}
}
where the path of integration is a straight line segment from $0$ to $\xi$.

Let $f$ be a holomorphic function on a Borel sector $S = \set{ \hbar \in \CC_\hbar ~\big|~ \Re (e^{i\theta}/\hbar) > 1/\R }$.
The (analytic) \dfn{Borel transform} (a.k.a., the \dfn{inverse Laplace transform}) of $f$ in the direction $\theta$ is defined by the following formula:
\eqntag{\label{210617101748}
	\Borel_\theta [\, f \,] (x, \xi)
		\coleq \frac{1}{2\pi i} \oint\nolimits_\theta f(x, \hbar) e^{\xi / \hbar} \frac{\d{\hbar}}{\hbar^2}
\fullstop{,}
}
where the integral is taken along the boundary of any Borel sector
\eqn{
	S' = \set{ \hbar \in \CC_\hbar ~\big|~ \Re (e^{i\theta}/\hbar) > 1/\R' } \subset S
}
of strictly smaller radius $\R' < \R$, traversed anticlockwise (i.e., emanating from the singular point $\hbar = 0$ in the direction $\theta - \pi/2$ and reentering in the direction $\theta + \pi/2$).
When $\theta = 0$, we write simply $\Borel$.

The fundamental fact that connects asymptotics of factorial type and the Borel transform is as follows (cf. \cite[Lemma B.5]{MY2008.06492}).
If $f = f(\hbar)$ is a holomorphic function defined on a sector $S$ with opening angle $|A| = \pi$ and $f$ admits an asymptotic expansion of factorial type as $\hbar \to 0$ \textit{uniformly} along the arc $A$, then the analytic Borel transform $\phi (\xi) = \Borel_\theta [f] (\xi)$ defines a holomorphic function on a tubular neighbourhood $\Xi_\theta$ of some thickness $\epsilon > 0$.
Moreover, its Laplace transform in the direction $\theta$ is well-defined and satisfies $\Laplace_\theta [\phi] = f$.

\enlargethispage{20pt}
Similarly, for a power series $\hat{f} (\hbar)$, the (formal) \dfn{Borel transform} is defined by
\eqntag{
\vspace{-3pt}
	\hat{\phi} (\xi) =
	\hat{\Borel} [ \, \hat{f} \, ] (\xi)
		\coleq \sum_{k=0}^\infty \phi_k \xi^k
\qtext{where}
	\phi_k \coleq \tfrac{1}{k!} f^\pto{k+1}
\fullstop\vspace{-3pt}
}
The fundamental fact that connects power series of factorial type and the formal Borel transform is as follows (cf. \cite[Lemma B.8]{MY2008.06492}).
If $\hat{f}$ is a power series of factorial type, then its formal Borel transform $\hat{\phi}$ is a convergent power series in $\xi$.
Furthermore, a power series of factorial type $\hat{f} (\hbar)$ is called a (\textit{semistably}) \dfn{Borel summable series} in the direction $\theta$ if its convergent Borel transform $\hat{\phi} (\xi)$ admits an analytic continuation $\phi (\xi) = \rm{AnCont}_\theta [\, \hat{\phi} \,] (\xi)$ to a tubular neighbourhood $\Xi_\theta$ of the ray $e^{i\theta} \RR_+$ with at-most-exponential growth in $\xi$ at infinity in $\Xi_\theta$.
If this is the case, the Laplace transform $\Laplace_\theta [\phi] (\hbar)$ is well-defined and defines a holomorphic function $f(\hbar)$ on some Borel sector $S$ bisected by the direction $\theta$, and we say that $f(\hbar)$ is the \dfn{Borel resummation} in direction $\theta$ of the formal power series $\hat{f} (\hbar)$, and we write
\eqn{
	f = s_\theta \big[ \, \hat{f} \, \big]
\fullstop
}
If $\theta = 0$, we write simply $s$.
Expressly, we have the following formulas:
\eqn{
	s_\theta \big[ \, \hat{f} \, \big] (\hbar)
	= f^\pto{0} + \Laplace_\theta \big[ \, \phi \, \big] (\hbar)
	= f^\pto{0} + \Laplace_\theta \big[ \, \rm{AnCont}_\theta [\, \hat{\phi} \,] \, \big] (\hbar)
\fullstop
}
Thus, Borel resummation $s_\theta$ can be seen as a map from the set of (germs of) holomorphic functions $f$ on $S$ with $|A| = \pi$ satisfying \eqref{210225134416} to the set of Borel summable power series.
One of the most fundamental theorems in asymptotics of factorial type and Borel-Laplace theory is a theorem of Nevanlinna \cite[pp.44-45]{nevanlinna1918theorie} (later rediscovered and clarified by Sokal \cite{MR558468}; see also \cite[p.182]{zbMATH00797135}, \cite[Theorem 5.3.9]{MR3495546}, as well as \cite[§B.3]{MY2008.06492}.), which says that this map $s_\theta$ is invertible and its inverse is the asymptotic expansion $\ae$.

\begin{thm}[\textbf{Nevanlinna's Theorem}]{210617120300}
For any phase $\theta$, the asymptotic expansion map $\op{\ae}$ restricts to an algebra isomorphism
\vspace{-3pt}
\begin{equation}
\begin{tikzcd}
	\set{\substack{\displaystyle
		\text{holomorphic functions}
		\\\displaystyle
		\text{admitting asymptotics of factorial type}
		\\\displaystyle
		\text{as $\hbar \to 0$ uniformly along $(\theta - \tfrac{\pi}{2}, \theta + \tfrac{\pi}{2})$}
		}}
	\ar[r, shift left=0.75ex, "\sim"', "\op{\ae}"]
	&	\set{\substack{\displaystyle
		\text{Borel summable}
		\\\displaystyle
		\text{power series}
		\\\displaystyle
		\text{in the direction $\theta$}
		}}
	\ar[l, shift left=0.75ex, "s_\theta"]
\end{tikzcd}
\fullstop
\end{equation}
\end{thm}

\subsection{Formal Perturbation Theory: Proof of \autoref*{211209161918}}
\label{211218191751}

In this subsection we supply a complete proof of the Formal Existence and Uniqueness Theorem (\autoref{211209161918}).
Notice that it is really a statement about $\hbar$-formal differential equations, so \autoref*{211209161918} follows immediately from the following more general assertion.

\begin{prop}{220801190628}
Let $X \subset \CC_x$ be a domain and fix a point $(x_0, y_0) \in X \times \CC^\N$.
Consider the following nonlinear differential system in formal power series in $\hbar$:
\eqntag{\label{220801190904}
	\hbar \del_x \hat{f} = \hat{\F} (x, \hbar, \hat{f})
\fullstop{,}
}
where $\hat{\F}$ is a holomorphic map $X \times \CC^\N \to \CC \bbrac{\hbar}^\N$; i.e.,
\eqntag{\label{220801190941}
	\hat{\F} (x, \hbar, y) \coleq \sum_{k=0}^\infty \F^\pto{k} (x, y) \hbar^k
}
is any formal power series in $\hbar$ with holomorphic coefficients $\F^\pto{k} : X \times \CC^\N \to \CC^\N$.
Assume that $(x_0, y_0)$ is not a transition point of \eqref{220801190904}; i.e., $\F^\pto{0} (x_0, y_0) = 0$ and the Jacobian $\del \F^\pto{0} \big/ \del y$ is invertible at $(x_0, y_0)$.

Then there exists a unique solution $\hat{f}$ near $x_0$ with leading-order $f^\pto{0} (x_0) = y_0$.
More precisely, there is a subdomain $U \subset X$ containing $x_0$ such that \eqref{220801190904} has a unique formal power series solution
\eqntag{\label{220223141023}
	\hat{f} = \hat{f} (x, \hbar) = \sum_{n=0}^\infty f^\pto{n} (x) \hbar^n
}
with holomorphic coefficients $f^\pto{n} : U \to \CC^\N$ with $f^\pto{0} (x_0) = y_0$.
In fact, every $n$-th-order correction $f^\pto{n}$ is uniquely determined by the leading-order solution $f^\pto{0}$ via an explicit recursive formula presented in \autoref{241028204502}.
\end{prop}

\begin{proof}
First, let us note down a few formulas in order to proceed with the calculation.
See \autoref{211214170249} for our notational conventions.
\mbox{}
\paragraph*{Step 0: Collect some formulas.}
Write the double power series expansion of each vector component $\hat{\F}_i$ as
\eqntag{\label{211208134110}
	\hat{\F}_i (x, \hbar, y) 
		= \sum_{k=0}^\infty \sum_{m=0}^\infty \sum_{|\mm| = m}
			\F^\pto{k}_{i,\mm} (x) \hbar^k y^\mm
\fullstop{,}
}
where $\F^\pto{k}_{i,\mm} y^\mm \coleq \F^\pto{k}_{i, m_1 \cdots m_\N} y_1^{m_1} \cdots y_\N^{m_\N}$.
In particular, the expansion of the leading-order part $\F^\pto{0}$ is
\eqntag{\label{211208145851}
	\F_i^\pto{0} (x, y) = \sum_{m=0}^\infty \sum_{|\mm| = m} \F^\pto{0}_{i,\mm} (x) y^\mm
\fullstop
}
For every $\mm \in \Natural^\N$, we have $\frac{\del}{\del y_j} y^\mm = \frac{m_j}{y_j} y^\mm$, so the $(i,j)$-component of the Jacobian matrix $\del \F^\pto{0} \big/ \del y$ can be written as
\eqntag{\label{211208145846}
	\left[ \frac{\del \F^\pto{0}}{\del y} \right]_{ij}
		= \frac{\del \F_i^\pto{0}}{\del y_j}
		= \sum_{m=0}^\infty \sum_{|\mm| = m} \F_{i,\mm}^\pto{0} (x) \frac{\del}{\del y_j} y^\mm
		= \sum_{m=0}^\infty \sum_{|\mm| = m} \frac{m_j}{y_j} \F_{i,\mm}^\pto{0} (x) y^\mm
\fullstop
}
Next, the $\mm$-th power $\hat{f}^\mm$ of the power series ansatz \eqref{220223141023} expands as follows:
\begin{adjustwidth}{-1cm}{-1cm}
\eqns{
	\left( \sum_{n=0}^\infty f^\pto{n} \hbar^n \right)^{\!\! \mm} \!\!\!
		&= 	\left( \sum_{n_1=0}^\infty f_1^\pto{n_1} \hbar^{n_1} \right)^{\!\! m_1} \!\!\!\!\!
			\cdots
			\left( \sum_{n_\N=0}^\infty f_\N^\pto{n_\N} \hbar^{n_\N} \right)^{\!\! m_\N}
\\
		&=	\left( 
				\sum_{n_1=0}^\infty
				\sum_{|\jj_1| = n_1}^{\jj_1 \in \Natural^{m_1}}
					f^\pto{j_{1,1}}_1 \cdots f^\pto{j_{1,m_1}}_1 \hbar^{n_1}
			\right)
			\cdots
			\left( 
				\sum_{n_\N=0}^\infty
				\sum_{|\jj_\N| = n_\N}^{\jj_\N \in \Natural^{m_\N}}
					f^\pto{j_{\N,1}}_\N \cdots f^\pto{j_{\N,m_\N}}_\N \hbar^{n_\N}
			\right)
\\
		&= \sum_{n=0}^\infty \sum_{|\nn| = n}
				\left(\sum_{|\jj_1| = n_1}^{\jj_1 \in \Natural^{m_1}}
					f^\pto{j_{1,1}}_1 \cdots f^\pto{j_{1,m_1}}_1
				\right)
				\cdots
				\left(\sum_{|\jj_\N| = n_\N}^{\jj_\N \in \Natural^{m_\N}}
					f^\pto{j_{\N,1}}_\N \cdots f^\pto{j_{\N,m_\N}}_\N
				\right)
				\hbar^n
\fullstop
}
\end{adjustwidth}
In these formulas, we have denoted the components of each vector $\jj_i \in \Natural^{m_i}$ by $(j_{i,1}, \ldots, j_{i,m_i})$.
Let us introduce the following shorthand notation:
\eqntag{\label{211208150122}
	\bm{f}_\mm^\pto{\nn} 
		\coleq
		\left(\sum_{|\jj_1| = n_1}^{\jj_1 \in \Natural^{m_1}}
			f^\pto{j_{1,1}}_1 \cdots f^\pto{j_{1,m_1}}_1
		\right)
		\cdots
		\left(\sum_{|\jj_\N| = n_\N}^{\jj_\N \in \Natural^{m_\N}}
			f^\pto{j_{\N,1}}_\N \cdots f^\pto{j_{\N,m_\N}}_\N
		\right)
\fullstop
}
We note the following simple but useful identities:
\begin{equation}
\begin{gathered}
	\bm{f}^\pto{\bm{0}}_{\bm{0}} = 1\fullstop{;}
	\qqquad
	\bm{f}_{\mm}^\pto{\bm{0}} = f^\pto{0}_\mm = (f_1^\pto{0})^{m_1} \cdots (f_\N^\pto{0})^{m_\N}\fullstop{;}
\\	\bm{f}_{\mm}^\pto{\nn} = 0 \text{ whenever $m_i = 0$ but $n_i > 0$ for some $i$\fullstop}
\end{gathered}
\end{equation}
The last identity in particular means $\bm{f}_{\bm{0}}^\pto{\nn} = 0$ whenever $|\nn| > 0$.
Using this notation, the formula for $\hat{f}^\mm$ can be written much more compactly:
\begin{equation}\label{211208150118}
	\hat{f}^\mm 
		= \left( \sum_{n=0}^\infty f^\pto{n} \hbar^n \right)^{\!\! \mm}
		= \sum_{n=0}^\infty \sum_{|\nn| = n} \bm{f}_\mm^\pto{\nn} \hbar^n
\fullstop
\end{equation}

\paragraph*{Step 1: Expand order-by-order.}
Now, we plug the solution ansatz \eqref{220810093453} into the differential equation $\hbar \del_x \hat{f} = \hat{\F} (x,\hbar,\hat{f})$.
Using \eqref{211208134110} and \eqref{211208150118}, we find:
\eqnstag{\nonumber
	\sum_{n=0}^\infty \del_x f_i^\pto{n} \hbar^{n+1}
		&= \sum_{k=0}^\infty \sum_{m=0}^\infty \sum_{|\mm| = m}
			\F^\pto{k}_{i,\mm} (x) \hbar^k \sum_{n=0}^\infty \sum_{|\nn| = n} \bm{f}_\mm^\pto{\nn} \hbar^n
\fullstop{,}
\\
\label{211208150414}
	\sum_{n=1}^\infty \del_x f_i^\pto{n-1} \hbar^n
	&=
	\sum_{n=0}^\infty \sum_{m=0}^\infty \sum_{k=0}^n
	\sum_{|\nn| = n - k} \sum_{|\mm| = m}
			\F^\pto{k}_{i,\mm} \bm{f}_\mm^\pto{\nn} \hbar^{n}
\qquad
\text{(\:$i = 1, \ldots, \N$\:)\fullstop}
}
We solve this system of equations for $f^\pto{n}$ order-by-order in $\hbar$.

\paragraph*{Step 2: Leading-order part.}
First, at order $n = 0$, equation \eqref{211208150414} yields:
\eqntag{\label{211208152123}
		0=
		\sum_{m=0}^\infty \sum_{|\mm| = m} \F_{i,\mm}^\pto{0} (x) \bm{f}^\mm_{\bm{0}}
\qqquad
\text{(\:$i = 1, \ldots, \N$\:)\fullstop}
}
Comparing with \eqref{211208145851}, these equations are simply the components of the equation $\F^\pto{0} (x, f^\pto{0}) = 0$.
By the Holomorphic Implicit Function Theorem, there is a domain $U \subset X$ containing $x_0$ such that there is a unique holomorphic map $f^\pto{0} : U \to \CC^\N$ that satisfies $\F^\pto{0} \big(x, f^\pto{0}(x)\big) = 0$ and $f^\pto{0} (x_0) = y_0$.
In fact, the domain $U$ can be chosen so small that the Jacobian $\del \F^\pto{0} \big/ \del y$ remains invertible at $(x,y) = \big(x, f^\pto{0}(x)\big)$ for all $x \in U$.
Thus, we can define a holomorphic invertible $\N\!\!\times\!\!\N$-matrix $\J$ on $U$ by
\eqntag{\label{211208151359}
	\J (x) \coleq \evat{\frac{\del \F^\pto{0}}{\del y}}{\big(x, f^\pto{0}(x)\big)}
\vspace{-3pt}
}
The $(i,j)$-component of $\J$ is:
\eqntag{\label{211208150755}
	[\J]_{ij}
		= \evat{\frac{\del \F_i^\pto{0}}{\del y_j}}{\big(x, f^\pto{0}(x)\big)} \!\!\!
		= \sum_{m=0}^\infty \sum_{|\mm| = m} \frac{m_j}{f^\pto{0}_j} \F_{i,\mm}^\pto{0} \bm{f}^\mm_{\bm{0}}
\fullstop\vspace{-3pt}
}

\paragraph*{Step 3: Next-to-leading-order part.}
For clarity, let us also examine equation \eqref{211208150414} at order $n = 1$.
First, let us note that if $|\nn| = 1$, then $\nn = (0, \ldots, 1, \ldots, 0)$ with the only $1$ in some position $j$, in which case notation \eqref{211208150122} reduces to:
\eqntag{\label{211208154854}
	\bm{f}_\mm^\pto{\nn}
		= (f_1^\pto{0})^{m_1} \cdots \left(m_j f^\pto{1}_j \right) (f^\pto{0}_j)^{m_j-1} \cdots (f_\N^\pto{0})^{m_\N}
		= \frac{m_j}{f^\pto{0}_j} \bm{f}^\pto{\bm{0}}_\mm f^\pto{1}_j
\fullstop
}
Then at order $n = 1$, equation \eqref{211208150414} comprises two main summands corresponding to $k = 0$ and $k = 1$, which simplify using identities \eqref{211208150755} and \eqref{211208154854}: 
\eqnstag{\nonumber
	\del_x f_i^\pto{0}
	&=
	\BLUE{\sum_{m=0}^\infty \sum_{|\mm| = m}
		\sum_{|\nn| = 1} \F^\pto{0}_{i,\mm} \bm{f}_\mm^\pto{\nn}}
	+ \sum_{m=0}^\infty \sum_{|\mm| = m}
		\F^\pto{1}_{i,\mm} \bm{f}_\mm^\pto{\bm{0}}
\fullstop{,}
\\\nonumber
	\del_x f_i^\pto{0}
	&=
	\BLUE{\sum_{j=1}^\N 
	\sum_{m=0}^\infty \sum_{|\mm| = m}
		\frac{m_j}{f^\pto{0}_j} \F^\pto{0}_{i,\mm} \bm{f}^\pto{\bm{0}}_\mm f^\pto{1}_j}
		+ \sum_{m=0}^\infty \sum_{|\mm| = m}
		\F^\pto{1}_{i,\mm} \bm{f}_\mm^\pto{\bm{0}}
\fullstop{,}
\\\label{211208162829}
	\del_x f_i^\pto{0}
	&=
	\BLUE{\sum_{j=1}^\N [\J]_{ij} f^\pto{1}_j}
	+ \sum_{m=0}^\infty \sum_{|\mm| = m}
		\F^\pto{1}_{i,\mm} \bm{f}_\mm^\pto{\bm{0}}
\fullstop
}
Observe that the \BLUE{blue} term is nothing but the $i$-th component of the vector $\J f^\pto{1}$:
\begin{equation}\label{220804185251}
	\BLUE{\J f^\pto{1}} = \del_x f^\pto{0} - \F^\pto{1} (x, f^\pto{0})
\fullstop	
\end{equation}
Since $\J$ is an invertible matrix, multiplying the system of $\N$ equations \eqref{211208162829} on the left by $\J^{-1}$, we solve uniquely for a holomorphic vector $f^\pto{1}$ on $U$.

\enlargethispage{20pt}
\paragraph*{Step 4: Inductive step.}
Suppose now that $n \geq 1$ and we have already solved equation \eqref{211208150414} for holomorphic vectors $f^\pto{0}, f^\pto{1}, \ldots, f^\pto{n-1}$ on $U$.
Similar to \eqref{211208154854}, we have that if $\nn = (0, \ldots, n, \ldots, 0)$ with the only nonzero entry in some position $j$, then
\eqntag{\label{211216093444}
	\bm{f}_\mm^\pto{\nn}
		= (f_1^\pto{0})^{m_1} \cdots \left(m_j f^\pto{n}_j \right) (f^\pto{0}_j)^{m_j-1} \cdots (f_\N^\pto{0})^{m_\N}
		= \frac{m_j}{f^\pto{0}_j} \bm{f}^\pto{\bm{0}}_\mm f^\pto{n}_j
\fullstop
}
Then at order $n$ in $\hbar$, we first separate out the $k = 0$ summand from which we then take out all the terms with $\nn = (0, \ldots, n, \ldots, 0)$, and simplify using \eqref{211208150755} and \eqref{211216093444}:
\vspace{-40pt}
\begin{adjustwidth}{-1cm}{-1cm}
\eqns{
	\del_x f_i^\pto{n-1}
	&=
	\sum_{m=0}^\infty \sum_{k=0}^n
	\sum_{|\nn| = n - k} \sum_{|\mm| = m}
			\F^\pto{k}_{i,\mm} \bm{f}_\mm^\pto{\nn}
\\
\displaybreak
	&=
	\sum_{m=0}^\infty
	\left(
	\sum_{|\nn| = n} \sum_{|\mm| = m}
			\F^\pto{0}_{i,\mm} \bm{f}_\mm^\pto{\nn}
	+
	\sum_{k=1}^n \sum_{|\nn| = n - k} \sum_{|\mm| = m}
			\F^\pto{k}_{i,\mm} \bm{f}_\mm^\pto{\nn}
	\right)
\\
	&=
	\BLUE{\sum_{j=1}^\N \sum_{m=0}^\infty
	\sum_{|\mm| = m}
			\frac{m_j}{f^\pto{0}_j} \F^\pto{0}_{i,\mm} \bm{f}^\pto{\bm{0}}_\mm f^\pto{n}_j}
		\hspace{0.55\textwidth}
\\
	&\phantom{=}~
	+ \sum_{m=0}^\infty 
	\left(
		\sum_{|\nn| = n}^{n_1, \ldots, n_\N \neq n} \!\!
		\sum_{|\mm| = m}
				\F^\pto{0}_{i,\mm} \bm{f}_\mm^\pto{\nn}
		+
		\sum_{k=1}^n \sum_{|\nn| = n - k} \sum_{|\mm| = m}
				\F^\pto{k}_{i,\mm} \bm{f}_\mm^\pto{\nn}
	\right)
\\
	&=
	\BLUE{\sum_{j=1}^\N [\J]_{ij} f^\pto{n}_j}
	+
	\sum_{m=0}^\infty 
	\left(
		\sum_{|\nn| = n}^{n_1, \ldots, n_\N \neq n} \!\!
		\sum_{|\mm| = m}
				\F^\pto{0}_{i,\mm} \bm{f}_\mm^\pto{\nn}
		+
		\sum_{k=1}^n \sum_{|\nn| = n - k} \sum_{|\mm| = m}
				\F^\pto{k}_{i,\mm} \bm{f}_\mm^\pto{\nn}
	\right)
\fullstop
}
\end{adjustwidth}
The term in \BLUE{blue} is nothing but the $i$-th component of the vector $\J f^\pto{n}$.
Observe that the remaining part of this expression involves only the already-known components of the lower-order vectors $f^\pto{0}, \ldots, f^\pto{n-1}$.
Therefore, since $\J$ is invertible, multiplying this system of $\N$ equations on the left by $\J^{-1}$, we can solve uniquely for the holomorphic vector $f^\pto{n}$ on $U$. 
\end{proof}

\subsection{Formal Perturbative Solution Formula.}
\label{241028204502}

In order to write down the explicit recursive formula defining the higher-order corrections $f^\pto{n}$, let us introduce the following notation (recall also our notational conventions in \autoref{211214170249}).
First, let us expand the perturbative expansion $\hat{\F}$ as a multi-power series in $\hbar$ and the components of $y$ as follows:
\begin{equation}\label{220818162629}
	\hat{\F} (x, \hbar, y) 
		= \sum_{k=0}^\infty \sum_{m=0}^\infty \sum_{|\mm| = m}
			\F^\pto{k}_{\mm} (x) \hbar^k y^\mm
\fullstop{,}
\end{equation}
where $\F^\pto{k}_{\mm} y^\mm \coleq \F^\pto{k}_{m_1 \cdots m_\N} y_1^{m_1} \cdots y_\N^{m_\N}$.
Second, for any $\mm, \nn \in \Natural^\N$, introduce the following shorthand notation:
\begin{equation}\label{220818162643}
	\bm{f}_\mm^\pto{\nn} 
		\coleq
		\left(\sum_{|\jj_1| = n_1}^{\jj_1 \in \Natural^{m_1}}
			f^\pto{j_{1,1}}_1 \cdots f^\pto{j_{1,m_1}}_1
		\right)
		\cdots
		\left(\sum_{|\jj_\N| = n_\N}^{\jj_\N \in \Natural^{m_\N}}
			f^\pto{j_{\N,1}}_\N \cdots f^\pto{j_{\N,m_\N}}_\N
		\right)
\fullstop{,}
\end{equation}
where we wrote each index vector $\jj_i \in \Natural^{m_i}$ in components as $(j_{i,1}, \ldots, j_{i,m_i})$.
Note that the empty sum evaluates to $0$, so $\bm{f}_{\mm}^\pto{\nn} = 0$ whenever $m_i = 0$ but $n_i > 0$ for some $i$.
Note also that $\bm{f}^\pto{\bm{0}}_{\bm{0}} = 1$.
Then the following corollary is a direct consequence of the proof of \autoref{211209161918}.

\begin{cor}{220801163839}
All the higher-order corrections $f^\pto{n}$, $n \geq 1$, for the formal perturbative solution from \autoref{211209161918} are given by the following formula:
\begin{equation}\label{220801164410}
	\J f^\pto{n} = \del_x f^\pto{n-1} 
		- 
		\sum_{m=0}^\infty \sum_{|\mm| = m}
	\left(
		\sum_{|\nn| = n}^{n_1, \ldots, n_\N \neq n} \!\!
				\F^\pto{0}_{\mm} \bm{f}_\mm^\pto{\nn}
		+
		\sum_{k=1}^n \sum_{|\nn| = n - k}
				\F^\pto{k}_{\mm} \bm{f}_\mm^\pto{\nn}
	\right)
\fullstop
\end{equation}
\end{cor}

\begin{rem}{220818180940}
Although formula \eqref{220801164410} may seem rather unwieldy, it is entirely explicit (insofar as the leading-order solution $f^\pto{0}$ is explicit) and involves essentially only finite-dimensional matrix algebra.
In particular, it involves no integration!
It can therefore be used to calculate the formal perturbative solution $\hat{f}$ to any desirable order in $\hbar$ with the aid of a computer or boundless patience and persistence.

For example, let us examine this formula for $n = 2$ and $\N = 2$.
For the first sum inside the big brackets, the only possible index vector $\nn$ is $(1,1)$, so formula \eqref{220818162643} simplifies to $\bm{f}^\pto{\nn}_\mm = \big( f^\pto{1}_1 \big)^{m_1} \big( f^\pto{1}_2 \big)^{m_2}$.
Similarly, for the second sum inside the big brackets, the only two index vectors $\nn$ satisfying $|\nn| = 1$ are $(1,0)$ and $(0,1)$, so $\sum_{|\nn| = 1} \bm{f}^\pto{\nn}_\mm = \big( f^\pto{1}_1 \big)^{m_1} \big( f^\pto{0}_2 \big)^{m_2} + \big( f^\pto{0}_1 \big)^{m_1} \big( f^\pto{1}_2 \big)^{m_2}$.
Thus, formula \eqref{220801164410} in this case becomes
\begin{multline*}\label{220818174038}
	\J f^\pto{2} = \del_x f^\pto{1} 
		- 
		\sum_{m=0}^\infty \sum_{|\mm| = m}
	\Bigg( \F^\pto{0}_{\mm} \big( f^\pto{1}_1 \big)^{m_1} \big( f^\pto{1}_2 \big)^{m_2}
\\		+ \F^\pto{1}_{\mm} \Big( \big( f^\pto{1}_1 \big)^{m_1} \big( f^\pto{0}_2 \big)^{m_2} + \big( f^\pto{0}_1 \big)^{m_1} \big( f^\pto{1}_2 \big)^{m_2} \Big) + \F^\pto{2}_{\mm} \big( f^\pto{0}_1 \big)^{m_1} \big( f^\pto{0}_2 \big)^{m_2}
	\Bigg)
\fullstop
\end{multline*}
\end{rem}

\subsection{Convergence of the Formal Borel Transform: Proof of \autoref*{220307181810}}
\label{220815121209}

In this subsection, we supply a proof of \autoref{220307181810}.

\paragraph*{Step 0: Preliminary simplification.}
Since $(x_0,y_0)$ is a regular point, let $f^\pto{0}$ be the unique holomorphic leading-order solution satisfying $f^\pto{0} (x_0) = y_0$, and let $f^\pto{1}$ be the first-order correction given by formula \eqref{220801142518}.
Make the following change of variables:
\begin{equation}\label{241031103146}
	f \mapsto g
\qqtext{given by}
	f = f^\pto{0} + \hbar (f^\pto{1} + g)
\fullstop
\end{equation}
A direct substitution leads to a new nonlinear system of the form
\begin{equation}\label{211217174517}
	\hbar \del_x g = \J \Big( g + \hbar \G (x, \hbar, g) \Big)
\fullstop{,}
\end{equation}
where $\G (x, \hbar, y)$ is a holomorphic vector function with locally uniform asymptotics of factorial type as $\hbar \to 0$, and $\J = \J (x)$ is a holomorphic matrix which is invertible in a neighbourhood of $x_0$.
\autoref*{220307181810} follows from the following more general assertion about differential equations in formal $\hbar$-power series of factorial type.

\begin{prop}\label{220815170141}
Let $U \subset X$ be a domain and fix a point $(x_0, y_0) \in U \times \CC^\N$.
Consider the following nonlinear system in formal $\hbar$-power series $\hat{g}$:
\eqntag{\label{220815170351}
	\hbar \del_x \hat{g} = \J \Big( \hat{g} + \hbar \hat{\G} (x, \hbar, \hat{g}) \Big)
\fullstop{,}
}
where $\J = \J (x)$ is a holomorphic invertible matrix on $U$ and
\eqntag{\label{220815170353}
	\hat{\G} (x, \hbar, y) \coleq \sum_{k=0}^\infty \G^\pto{k} (x, y) \hbar^k
}
is a power series in $\hbar$ of factorial type locally uniformly on $U \times \CC^\N$.
Then the unique formal $\hbar$-power series solution $\hat{g}$ on $U$ is locally uniformly of factorial type.
In particular, its formal Borel transform $\hat{\Borel} [ \, \hat{g} \, ] (x, \xi)$ is a convergent power series in $\xi$, locally uniformly for all $x \in U$.
\end{prop}

\begin{proof}
Using a very similar computation as in the proof of \autoref{211209161918}, we can deduce that the unique formal power series solution of \eqref{220815170351} is
\eqntag{\label{211209163813}
	\hat{g} = \hat{g} (x,\hbar) = \sum_{n=1}^\infty g^\pto{n} (x) \hbar^n
}
whose terms are given by the following recursive formula:
\begin{equation}\label{220815175415}
	g^\pto{n+1}_i = \sum_{j=1}^\N [\J^{-1}]_{ij} \del_x g^\pto{n}_j
		-
		\sum_{m=0}^\infty \sum_{k=0}^n \sum_{|\mm| = m} \sum_{|\nn| = n-k}
		\G^\pto{k}_{i,\mm} \bm{g}_\mm^\pto{\nn}
\fullstop
\end{equation}
Here, $\G^\pto{k}_{i,\mm} = \G^\pto{k}_{i,\mm} (x)$ are the coefficients of the double power series
\eqntag{\label{211209163824}
	\hat{\G}_i (x, \hbar, y)
		= \sum_{k=0}^\infty \sum_{m=0}^\infty \sum_{|\mm| = m} \G^\pto{k}_{i,\mm} (x) \hbar^k y^\mm
\fullstop{,}
}
and we have introduced the shorthand notation
\begin{equation}\label{211209164708}
	\bm{g}^\pto{\nn}_\mm
		\coleq
		\left(\sum_{|\jj_1| = n_1}^{\jj_1 \in \Natural^{m_1}}
			g^\pto{j_{1,1}}_1 \cdots g^\pto{j_{1,m_1}}_1
		\right)
		\cdots
		\left(\sum_{|\jj_\N| = n_\N}^{\jj_\N \in \Natural^{m_\N}}
			g^\pto{j_{\N,1}}_\N \cdots g^\pto{j_{\N,m_\N}}_\N
		\right)
\fullstop
\end{equation}
Indeed, the fact that $g^\pto{0} \equiv 0$ is obvious, and if we plug the solution ansatz \eqref{211209163813} into the double power series expansion \eqref{211209163824} of $\hat{\G}_i$, then the righthand side of equation \eqref{220815170351} expands as follows:
\begin{flalign*}
	\hbar \sum_{k=0}^\infty \sum_{m=0}^\infty \sum_{|\mm| = m}
		\G^\pto{k}_{i,\mm} \hbar^k \left( \sum_{n=0}^\infty g^\pto{n} \hbar^n \right)^\mm
	&= \hbar \sum_{k=0}^\infty \sum_{m=0}^\infty 
				\sum_{|\mm| = m} \sum_{n=0}^\infty \sum_{|\nn|=n}
			\G^\pto{k}_{i,\mm} \bm{g}_\mm^\pto{\nn} \hbar^{k+n}
\\	&= \hbar \ORANGE{\sum_{n=0}^\infty \sum_{k=0}^n} \sum_{m=0}^{\infty} 
				\sum_{|\mm| = m} \sum_{|\nn|=\ORANGE{n-k}}
			\G^\pto{k}_{i,\mm} \bm{g}_\mm^\pto{\nn} \hbar^{\ORANGE{n}}
\fullstop
\end{flalign*}

\enlargethispage{20pt}

Let $\Disc_\R \subset U$ be any sufficiently small disc of radius $\R > 0$ such that there are constants $\A, \B > 0$ that give the following bounds: for all $i = 1, \ldots, \N$, all $k,m \in \Natural$, all $\mm \in \Natural^\N$ with $|\mm| = m$, and all $x \in \Disc_\R$,
\eqntag{\label{211209172430}
	\big| \G^\pto{k}_{i,\mm} (x) \big| \leq \rho_m \A \B^{k+m} k!
\qqtext{and}
	\big| \J^{-1} (x) \big| \leq \tfrac{1}{\N} \A
\fullstop{,}
}
where $\rho_m$ is a normalisation constant defined by
\eqntag{\label{211209172628}
	\frac{1}{\rho_m} \coleq \sum_{|\mm| = m} 1 = \tbinom{m + \N - 1}{\N - 1}
\fullstop
}
It will be convenient for us to assume without loss of generality that $\A \geq 3$ and $\R < 1$.
We shall prove that the solution $\hat{g}$ is a power series of factorial type locally uniformly on any compactly contained subset of $\Disc_\R$.
In fact, we will prove something a little bit stronger as follows.
For any $r \in (0, \R)$, denote by $\Disc_r \subset \Disc_\R$ the concentric subdisc of radius $r$.
Then our assertions follow from the following lemma.

\begin{lem}\label{220802124214}
There is a real constant $\M > 0$ such that, for all $r \in (0,\R)$,
\eqntag{\label{191211212938}
	\big| g_i^\pto{n+1} (x) \big| \leq \M^{n+1} (\R - r)^{-n} n!
}
for all $n \in \Natural$, all $i = 1, \ldots, \N$, and uniformly for all $x \in \Disc_r$.
(The constant $\M$ is independent of $r, x, n$, but may depend on $\R, \A, \B$.)
In particular, for any $r \in (0,\R)$, the power series $\hat{g}$ is uniformly of factorial type on $\Disc_r$.
\end{lem}

The remainder of this subsection is devoted to proving \autoref*{220802124214}.
The bound \eqref{191211212938} will be demonstrated in two main steps.
First, we will recursively construct a sequence $\set{\M_n}_{n=0}^\infty$ of nonnegative real numbers such that for all $n \in \Natural$, all $i = 1, \ldots, \N$, all $r \in (0, \R)$, and all $x \in \Disc_r$, we have the bound
\vspace{-3pt}
\eqntag{\label{211209173300}
	\big| g^\pto{n+1}_i (x) \big| \leq \M_{n+1} \delta^{-n} n!
\fullstop{,}
\vspace{-5pt}
}
where we put $\delta \coleq \R - r$.
Then we will show that there is a constant $\M > 0$ (independent of $r$) such that $\M_n \leq \M^n$ for all $n$.

\paragraph*{Step 1: Construction of $\set{\M_n}_{n=0}^\infty$.}
Let $\M_0 \coleq 0$.
Now we use induction on $n$ and formula \eqref{220815175415}.

\paragraph*{Step 1.1: Inductive hypothesis.}
Assume that we have already constructed nonnegative real numbers $\M_0, \ldots, \M_n$ such that, for all $i = 1, \ldots, \N$, all $k = 0, \ldots, n-1$, all $r \in (0,\R)$, and all $x \in \Disc_r$, we have the bound
\vspace{-3pt}
\eqntag{\label{191212164433}
	\big| g^\pto{k+1}_i (x) \big| \leq \M_{k+1} \delta^{-k} k!
\vspace{-5pt}
}

\enlargethispage{20pt}
\paragraph*{Step 1.2: Bounding the derivative.}
In order to derive an estimate for $g_i^\pto{n+1}$, we first need to estimate the derivative term $\del_x g_i^\pto{n}$, for which we use Cauchy estimates as follows.
We claim that for all $r \in (0,\R)$ and all $x \in \Disc_r$,
\eqntag{\label{191212164536}
	\big| \del_x g_i^\pto{n} (x) \big| \leq \A \M_{n} \delta^{-n} n!
\fullstop
}
Indeed, for every $r \in (0,\R)$, we define
\eqn{
	\delta_n \coleq
		\delta \frac{n}{n+1}
\qqtext{and}
	r_n \coleq \R - \delta_n
\fullstop
}
Then inequality \eqref{191212164433} holds in particular with $k = n-1$ and $r = r_n$, yielding
\eqn{
	\big| g_i^\pto{n} (x) \big|
		\leq \M_{n}  \delta_n^{1-n} (n-1)!
		= \M_{n} \delta^{1-n} \tfrac{n}{n+1}
			\left( \tfrac{n+1}{n} \right)^{n} (n-1)!
		\leq \A \M_{n} \delta^{-n} n! \tfrac{\delta}{n+1}
\fullstop
}
Here, we have used the estimate $( 1 + 1/n )^{n} \leq e \leq \A$.
Finally, notice that for every $x \in \Disc_r$, the closed disc centred at $x$ of radius $r_n - r = (\R - \delta_n) - (\R - \delta) = \delta - \delta_n = \frac{\delta}{n+1}$ is contained inside the disc $\Disc_{r_n}$.
Therefore, Cauchy estimates imply \eqref{191212164536}.

\paragraph*{Step 1.3: Bounding $\bm{g}_\mm^\pto{\nn}$.}
Let us estimate each $\bm{g}_\mm^\pto{\nn}$ separately using formula \eqref{211209164708}:
\eqns{
	\big| \bm{g}_\mm^\pto{\nn} \big|
	&\leq
		\left(\sum_{|\jj_1| = n_1}^{\jj_1 \in \Natural^{m_1}}
			\big| g^\pto{j_{1,1}}_1 \big| \cdots \big| g^\pto{j_{1,m_1}}_1 \big|
		\right)
		\cdots
		\left(\sum_{|\jj_\N| = n_\N}^{\jj_\N \in \Natural^{m_\N}}
			\big| g^\pto{j_{\N,1}}_\N \big| \cdots \big| g^\pto{j_{\N,m_\N}}_\N \big|
		\right)
\\
	&\leq
		\left(\sum_{|\jj_1| = n_1}^{\jj_1 \in \Natural^{m_1}}
			\M_{j_{1,1}} \cdots \M_{j_{1,m_1}}
		\right)
		\cdots
		\left(\sum_{|\jj_\N| = n_\N}^{\jj_\N \in \Natural^{m_\N}}
			\M_{j_{\N,1}} \cdots \M_{j_{\N,m_\N}}
		\right)
			\delta^{-n}
			\big(|\nn| - |\mm| \big)!
\fullstop{,}
}
where we repeatedly used the inequality $i!j!\leq(i+j)!$.
Introduce the following shorthand:
\vspace{-5pt}
\eqntag{\label{211209180714}
	\bm{\M}^\mm_\nn 
	\coleq 
	\left(\sum_{|\jj_1| = n_1}^{\jj_1 \in \Natural^{m_1}}
		\M_{j_{1,1}} \cdots \M_{j_{1,m_1}}
	\right)
	\cdots
	\left(\sum_{|\jj_\N| = n_\N}^{\jj_\N \in \Natural^{m_\N}}
		\M_{j_{\N,1}} \cdots \M_{j_{\N,m_\N}}
	\right)
\fullstop
}
Then the above estimate for $\bm{g}_\mm^\pto{\nn}$ becomes simply $|\bm{g}_\mm^\pto{\nn}| \leq \bm{\M}^\mm_\nn \delta^{-n} \big( |\nn|-|\mm| \big)!$.

\paragraph*{Step 1.4: Inductive step.}
Now we can finally estimate $g_i^\pto{n+1}$ using formula \eqref{220815175415}:
\vspace{-5pt}
\eqns{
	\big| g_i^\pto{n+1} \big|
	&\leq \sum_{j=1}^\N \Big| [\J^{-1}]_{ij} \del_x g^\pto{n}_j \Big|
			+
		\sum_{m=0}^\infty \sum_{k=0}^n \sum_{|\mm| = m} \sum_{|\nn| = n-k}
			\big| \G^\pto{k}_{i,\mm} \big| \cdot \big| \bm{g}_\mm^\pto{\nn} \big|
\\
	&\leq 
		\A^2 \M_{n} \delta^{-n} n!
		+ 
		\sum_{k=0}^n
		\sum_{m=0}^\infty 
		\sum_{|\mm| = m} \sum_{|\nn| = n-k}
		\rho_m \A \B^{k+m} k! \bm{\M}^\mm_\nn 
		\delta^{-n}
		\big( n-k-m \big)!
\\
	&\leq
		\A^2
		\left(
			\M_{n}
			+
			\sum_{k=0}^n
			\B^k
			\sum_{m=0}^\infty
			\sum_{|\mm| = m} \sum_{|\nn| = n-k}
			\rho_m \B^m \bm{\M}^\mm_\nn
		\right)
		\delta^{-n}
		n!
\fullstop
\vspace{-5pt}
}
Thus, we can define
\vspace{-5pt}
\eqntag{\label{211209181700}
	\M_{n+1} 
		\coleq 
		\A^2
		\left(
			\M_{n}
			+
			\sum_{k=0}^n
			\B^k
			\sum_{m=0}^\infty
			\sum_{|\mm| = m} \sum_{|\nn| = n-k}
			\rho_m \B^m \bm{\M}^\mm_\nn
		\right)
\fullstop\vspace{-5pt}
}

\paragraph*{Step 2: Construction of $\M$.}
To see that $\M_n \leq \M^n$ for some $\M > 0$, we argue as follows.
Consider the following pair of power series in an abstract variable $t$:
\vspace{-5pt}
\eqntag{
	\hat{p} (t) \coleq \sum_{n=0}^\infty \M_n t^n
\qtext{and}
	\Q (t) \coleq \sum_{m=0}^\infty \B^m t^m
\fullstop
\vspace{-5pt}
}
Notice that $\hat{p} (0) = \M_0 = 0$ and that $\Q (t)$ is convergent.
We will show that $\hat{p} (t)$ is also convergent.
The key is the observation that they satisfy the following equation, which was found by trial and error:
\vspace{-5pt}
\eqntag{\label{211209182719}
	\hat{p} (t)
		= \A^2 \Big( t \hat{p} (t) + t \Q (t) \Q \big( \hat{p} (t) \big) \Big)
		= \A^2 \left( t \hat{p} (t) 
			+ t \Q (t) \sum_{m=0}^\infty \B^m \hat{p}(t)^m \right)
\fullstop
\vspace{-5pt}
}

\enlargethispage{20pt}
\paragraph*{Step 2.1: Verification.}
In order to verify this equality, we rewrite the power series $\Q (t)$ in the following way:
\vspace{-5pt}
\eqn{
	\Q (t) = \sum_{m=0}^\infty \sum_{|\mm| = m} \rho_m \B^m t^{\mm}
\fullstop{,}
\vspace{-5pt}
}
where $t^\mm \coleq t^{m_1} \cdots t^{m_\N} = t^m$.
Then \eqref{211209182719} is straightforward to check directly by substituting the power series $\hat{p}(t)$ and $\Q(t)$ and comparing the coefficients of $t^{n+1}$ using the defining formula \eqref{211209181700} for $\M_{n+1}$.
Indeed, using the notation introduced in \eqref{211209180714}, we find:
\eqns{
	\hat{p} (t)^{\mm}
	&= \hat{p} (t)^{m_1} \cdots \hat{p} (t)^{m_\N}
\\
	&= 	\left( \sum_{n_1=0}^\infty \M_{n_1} t^{n_1} \right)^{\!\! m_1}
		\cdots
		\left( \sum_{n_\N=0}^\infty \M_{n_\N} t^{n_\N} \right)^{\!\! m_\N}
\\
	&= 
	\left(\sum_{n_1=0}^\infty \sum_{|\jj_1| = n_1}^{\jj_1 \in \Natural^{m_1}}
		\M_{j_{1,1}} \cdots \M_{j_{1,m_1}}
		t^{n_1}
	\right)
	\cdots
	\left(\sum_{n_\N=0}^\infty \sum_{|\jj_\N| = n_\N}^{\jj_\N \in \Natural^{m_\N}}
		\M_{j_{\N,1}} \cdots \M_{j_{\N,m_\N}}
		t^{n_\N}
	\right)
\\
	&=
	\sum_{n=0}^\infty
	\sum_{|\nn|=n}
	\left(\sum_{|\jj_1| = n_1}^{\jj_1 \in \Natural^{m_1}}
		\M_{j_{1,1}} \cdots \M_{j_{1,m_1}}
	\right)
	\cdots
	\left(\sum_{|\jj_\N| = n_\N}^{\jj_\N \in \Natural^{m_\N}}
		\M_{j_{\N,1}} \cdots \M_{j_{\N,m_\N}}
	\right)
	t^n
\\
	&=
	\sum_{n=0}^\infty
	\sum_{|\nn|=n}
	\bm{\M}^\mm_\nn t^n
\fullstop
}
Then the bracketed expression on the righthand side of \eqref{211209182719} expands as follows:
\vspace{-20pt}
\begin{adjustwidth}{-1cm}{-1cm}
\eqns{
	&\phantom{=}~~
	t \hat{p} (t) +
	t \left( \sum_{k=0}^\infty \B^k t^k \right)
		\left(
			\sum_{m=0}^\infty \sum_{|\mm| = m} \rho_m \B^m 
			\big( \hat{p} (t) \big)^{\mm}
		\right)
\\
	&=
	\sum_{n=0}^\infty \M_n t^{n+1} +
	t \left( \sum_{k=0}^\infty \B^k t^k \right)
		\left(
			\sum_{m=0}^\infty \sum_{|\mm| = m} \rho_m \B^m
			\left(
				\sum_{n=0}^\infty
				\sum_{|\nn|=n}
				\bm{\M}^\mm_\nn t^n
			\right)
		\right)
\\
	&=
	\sum_{n=0}^\infty \M_n t^{n+1} +
	t \left( \sum_{k=0}^\infty \B^k t^k \right)
		\left( \sum_{n=0}^\infty \C_n t^n \right)
\qtext{where}
	\C_n \coleq \sum_{m=0}^\infty \sum_{|\mm| = m} \sum_{|\nn|=n} \rho_m \B^m \bm{\M}^\mm_\nn t^n
\\
	&=
	\sum_{n=0}^\infty \M_n t^{n+1} +
	t \sum_{n=0}^\infty \sum_{k=0}^n \B^k \C_{n-k} t^n
\\	&= \sum_{n=0}^\infty 
		\left(
			\M_n +
			\sum_{k=0}^n \B^k
			\sum_{m=0}^\infty \sum_{|\mm| = m}
			\sum_{|\nn|=n-k} \rho_m \B^m \bm{\M}^\mm_\nn
		\right)
		t^{n+1}
\fullstop{,}
}
\end{adjustwidth}
which matches with \eqref{211209181700}.

\paragraph*{Step 2.2: Implicit Function Theorem argument.}
Now, consider the following holomorphic function in two complex variables $(t,p)$:
\eqn{
	\P (t,p) \coleq - p + \A^2 t p + \A^2 t \Q(t) \Q(p)
\fullstop
}
It has the following properties:
\eqn{
	\P (0,0) = 0
\qqtext{and}
	\evat{\frac{\del \P}{\del p}}{(t,p) = (0,0)} = -1 \neq 0
\fullstop
}
By the Holomorphic Implicit Function Theorem, there exists a unique holomorphic function $p (t)$ near $t = 0$ such that $p (t) = 0$ and $\P \big(t, p (t)\big) = 0$.
Thus, $\hat{p} (t)$ must be the convergent Taylor series expansion at $t = 0$ for $p(t)$, and so its coefficients grow at most exponentially: i.e., there is a constant $\M > 0$ such that $\M_n \leq \M^n$.
\end{proof}

\end{appendices}

\begin{adjustwidth}{-2cm}{-1.5cm}
{\footnotesize
\bibliographystyle{nikolaev}
\bibliography{/Users/Nikita/Documents/References}
}
\end{adjustwidth}
\end{document}